\newtheorem{thm}{Theorem}[section]
\newtheorem{prop}[thm]{Proposition}
\newtheorem{cor}[thm]{Corollary}
\theoremstyle{remark}
\newtheorem{rema}{Remark}[section]
\def\mb{\mathbf}
\def\mr{\mathrm}
\def\mc{\mathcal}
\newlength{\equwidth}
\newcommand{\crd}{\mbox{$                                     
\begin{picture}(9,8)(1.6,0.15)
\put(1,0.2){\mbox{$ D \hspace{-7.8pt} /$}}
\end{picture}$}}
\DeclareMathOperator{\Ric}{Ric}
\DeclareMathOperator{\PP}{P}
\def\ga{\gamma}
\def\de{\delta}
\def\ka{\kappa}
\def\rh{\rho}
\def\si{\sigma}
\def\Ups{\Upsilon}
\def\ph{\varphi}
\def\om{\omega}
\def\Ga{\Gamma}
\def\De{\Delta}
\def\Th{\Theta}
\def\La{\Lambda}
\def\Om{\Omega}
\def\pa{\partial}
\def\t{\otimes}
\def\delstar{\pa^*}
\def\goesto{\rightarrow}
\def\equiv{\Leftrightarrow}
\def\embed{\hookrightarrow}
\def\cinf{\ensuremath{\mathrm{C}^\infty}}
\def\na{\mathrm{\nabla}}
\def\lapl{\mathrm{\square}}
\def\triang{\mathrm{\bigtriangleup}}
\def\Lapl{\mathrm{\triang}}
\def\im{\mathrm{im}\ }
\def\G{\mathcal{G}}
\DeclareMathOperator{\GL}{GL}
\DeclareMathOperator{\Spin}{Spin}
\DeclareMathOperator{\CSpin}{CSpin}
\def\CO{\mathrm{CO}}
\def\ti{\tilde}
\def\rr{\ensuremath{\mathbb{R}}}
\def\HH{\ensuremath{\mathcal{H}}}
\def\VV{\ensuremath{\mathcal{V}}}
\def\ST{\ensuremath{\mathcal{S}}}
\def\SU{\ensuremath{\mathrm{SU}}}
\def\SL{\ensuremath{\mathrm{SL}}}
\def\SO{\ensuremath{\mathrm{SO}}}
\def\GL{\ensuremath{\mathrm{GL}}}
\def\sl{\ensuremath{\mathfrak{sl}}}
\def\so{\ensuremath{\mathfrak{so}}}
\def\g{\ensuremath{\mathfrak{g}}}
\def\p{\ensuremath{\mathfrak{p}}}
\def\q{\ensuremath{\mathfrak{q}}}
\newcommand{\bg}{\mbox{\boldmath{$ g$}}}
\DeclareMathOperator{\Hol}{Hol}
\def\today{\ifcase\month\or
 January\or February\or March\or April\or May\or June\or
 July\or August\or September\or October\or November\or December\fi
 \space\number\day, \number\year}
\def\mb{\mathbf} 
\def\mr{\mathrm}
\def\mc{\mathcal}
\def\calG{\ensuremath{\mathcal{G}}}
\def\ef{\ensuremath{\mathrm{f}}}
\def\ee{\ensuremath{\mathrm{e}}}
\def\t{\otimes}
\title[A non-normal Fefferman-type construction]{A non-normal Fefferman-type construction of split-signature conformal structures admitting twistor spinors}
\author{Matthias Hammerl and Katja Sagerschnig} \email{matthias.hammerl@univie.ac.at,
katja.sagerschnig@univie.ac.at} \address{Faculty of Mathematics,
University of Vienna, Nordbergstra\ss e~15, A--1090 Wien, Austria}
\date{\today}
\subjclass[2000]{53A30, 53B15, 35N10} 
\keywords{projective geometry, conformal geometry, spin geometry, twistor spinors,
Fefferman-type constructions, conformal Killing fields, almost Einstein
structures}
\begin{document}

\maketitle

\begin{abstract}
  We treat a non-normal Fefferman-type construction based on
  an inclusion $\SL(n+1)\embed\Spin(n+1,n+1)$. The construction associates
  a split signature $(n,n)$-conformal spin structure to a projective
  structure of dimension $n$. For $n\geq 3$ the induced
  conformal Cartan connection is shown to be normal if and only if it is flat.
  The main technical work of this article
  consists in showing that in the non-flat case 
  the normalised conformal Cartan connection
  still allows a parallel (pure) spin-tractor and thus a corresponding
  (pure) twistor spinor on the conformal space. The Fefferman-type
  construction presented here is an alternative approach to
  study a construction of Dunajski-Tod.
\end{abstract}

\section{Introduction}

The original Fefferman construction \cite{fefferman} canonically
associated a conformal structure on a circle bundle over a CR-structure.
The resulting conformal structure is rather special: it admits
solutions to certain invariant overdetermined equations, in particular,
it carries a light-like conformal Killing field. In fact, it was
shown by Sparling, cf. \cite{graham-sparlingchar}, that a conformal structure is
the Fefferman-space of some CR-structure if and only if it admits
such a Killing field which also satisfies additional (conformally invariant)
properties. This yields the characterisation of the CR-Fefferman spaces.
The characterising property can alternatively be understood as a \emph{holonomy reduction} of the conformal structure:
It was shown in \cite{cap-gover-cr} that a conformal structure $(M,\mc{C})$\
is locally the Fefferman-space of a CR-structure if and only if its
conformal holonomy satisfies $\Hol(\mc{C})\subset\SU(p+1,q+1)\subset\SO(2p+2,2q+2).$ 

A  generalisation of the original Fefferman-construction was described in \cite{cap-correspondence}, and in recent years
a number of constructions have been discussed in that framework:
The original construction was treated via this approach in \cite{cap-gover-cr}, \cite{mrh-sag-rank2} discussed Nurowski's conformal structures \cite{nurowski-metric} that are associated to generic rank two distributions on $5$-manifolds, \cite{jesse-quaternionic} treated a Fefferman-type construction of
conformal structures from quaternionic contact structures,  \cite{mrh-sag-twistors} discussed Bryant's \cite{bryant-3planes} conformal structures associated with generic $3$-planes on $6$-manifolds.

In all cited cases the Fefferman-type construction is \emph{normal}: this says that, starting from the normal Cartan connection encoding the original geometric structure (e.g., a CR-structure, a generic distribution, a quaternionic contact structure) the induced conformal Cartan connection form that is built via the Fefferman-type construction is again normal. 
This immediately implies that the holonomy of the conformal structure reduces to the included subgroup and makes it possible to derive a holonomy-based
characterisation of the induced structures. 

In this paper we discuss a non-normal Fefferman-type construction. We
associate a split signature $(n,n)$ conformal spin structure to a projective
structure of dimension $n$.  The construction is based
on an inclusion $\SL(n+1)\embed\Spin(n+1,n+1)$. If $n=2$ this construction is shown to be normal, and the usual consequences on conformal  holonomy reduction, Proposition \ref{prop-22}, and symmetry-decomposition, \ref{prop-22kill}, can be derived. 
In addition, it is also possible in this case to understand the space of (almost) Einstein metrics in the induced conformal class in terms of projective data, Proposition \ref{prop-22ein}.
For $n\geq 3$,  the induced conformal Cartan connection is shown to be normal if and only if the original projective structure was already flat, Proposition \ref{prop-nonnormal}. This fact immediately poses problems for the goal of relating the original projective and the induced conformal geometric structure: since the induced conformal Cartan connection form is not normal, its curvature and holonomy are no well defined conformally invariant objects. To obtain information on the conformal
structure it is thus necessary to understand how the normal conformal connection differs from this one. 
We derive strong restrictions on the form of the normalised Cartan connection in Proposition \ref{prop-delstarformnn}. These imply in particular that the induced conformal structures, which carry a canonical spin structure, are endowed
with a solution of the twistor spinor equation, Theorem \ref{prop-norm2}.

The original motivation for this Fefferman-type construction comes from two sources. The fist one is work by Dunajski-Tod, \cite{dunajski-tod}:
Extending a construction due to Walker \cite{walker}, which associates a pseudo-Riemannian split signature $(n,n)$-metric to an affine torsion-free connection on an $n$-manifold, they associate a conformal split signature $(n,n)$-metric to a projective class of torsion-free affine connections on an $n$-manifold. Using a normal form
for the induced metrics it is also shown that they admit a twistor spinor. 
This construction is also discussed  in Dunajski-West, \cite{dunajski-west}. The second source is a paper by P. Nurowski and G. Sparling, \cite{nurowski-sparling-cr}, which treats the construction from $2$-dimensional projective structures
to  conformal structures of signature $(2,2)$ using Cartan connections. A generalisation of this approach to higher dimensions was mentioned in \cite{nurowski-projective-metric}.
The precise relation between  the cited works and the construction here has been shown recently by \v{S}ilhan-{\v{Z}}{\'a}dn{\'{\i}}k, \cite{silhan-zadnik-esitalk}: 
It is based on an interpretation of the explicit formula for the Dunajski-Tod conformal metric in terms of 'Thomas's projective parameters', which in turn has relations to tractor calculus for projective structures and the projective ambient metric, \cite{thomass}, and thus provides a link to the Fefferman-type interpretation of the construction.

\subsubsection*{Outlook}
This constructions leads to interesting questions for future work.
In signature $(2,2)$ Dunajski-Tod could show, \cite{dunajski-tod}, Theorem 4.1, that one has a $1:1$-correspondence between compatible
(pseudo)-Riemannian metrics for the original projective class and (para)-K\"{a}hler-metrics in the induced conformal class. In forthcoming joint work
with J. \v{S}ilhan and V. {\v{Z}}{\'a}dn{\'{\i}}k  we will discuss this relation in
terms of BGG-solutions to certain projective and conformal equations.
Another problem that will be treated is to characterise the resulting conformal structures. As is shown in this article, the existence of a certain pure twistor spinor should play a large role in this, but additional data is necessary to characterise
the structures precisely.
It would also be interesting to study the ambient-metrics of the
induced conformal Fefferman-spaces, as it was done for certain generic $2$-distributions in \cite{leistner-nurowski-g2ambient}.

\subsubsection*{Acknowledgements}
The first author has enjoyed discussions with Maciej Dunajski at
the workshop 'Dirac operators and special Geometries' held in 2009 in Rauischholzhausen.
Both authors benefited from discussions with Josef \v{S}ilhan and Vojtech {\v{Z}}{\'a}dn{\'{\i}}k - in particular we are very thankful for their comments and suggestions on a draft of this paper.\\
M. H. is supported by the project P23244-N13
of the "Fonds zur F\"{o}rderung der wissenschaftlichen For\-schung"
(FWF).
K. S. is supported by an Erwin Schr\"{o}dinger-Fellowship, J 3071-N13 (FWF).

\section{Basic facts about parabolic geometries and some background on projective and conformal structures}
\subsection{Parabolic geometries.}
Let $G$ be a Lie group with Lie algebra $\g$ and $P\subset G$ a closed
subgroup with Lie algebra $\p$. A \emph{Cartan geometry} $(\mathcal{G},\omega)$
of type $(G,P)$  is a $P$-principal bundle $\mathcal{G}\to M$ together
with a \emph{Cartan connection} $\omega\in\Omega^1(\mathcal{G},\g)$, i.e.,
a $\g$-valued $1$-form on $\mathcal{G}$ that i) is $P$-equivariant, ii)
maps each fundamental vector field $\zeta_{X}$  to its generator
$X\in\p$, and iii) defines a linear isomorphism
$\omega(u):T_u\mathcal{G}\to\mathfrak{g}$ for each $u\in\mathcal{G}$.

The \emph{curvature} of a Cartan connection $\omega$ is the $2$-form
$K\in\Omega^2(\mathcal{G},\g)$ defined as
\begin{align*}
K(\xi,\eta)=d\omega(\xi,\eta)+[\omega(\xi),\omega(\eta)]
\end{align*}
for $\xi,\eta\in\mathfrak{X}(\mathcal{G})$. It is equivalently
encoded in the curvature function
$\kappa:\mathcal{G}\to\Lambda^2(\g/\p)^*\otimes \g$
\begin{align*}
\kappa(u)(X+\p,Y+\p)=K(\omega^{-1}(u)(X),\omega^{-1}(u)(Y)).
\end{align*}
The curvature is a complete obstruction to local equivalence with the homogeneous model $G\to G/P$ endowed with the Maurer-Cartan form $\omega^{MC}$.
If the image of $\kappa$ is contained in $\Lambda^2(\g/\p)^*\otimes \p,$ then $(\mathcal{G},\omega)$ is called \emph{torsion-free}.

A \emph{parabolic geometry} is a Cartan geometry of type $(G,P)$, where
$G$ is a semisimple Lie group and $P$ is a parabolic subgroup. Every
parabolic subgroup is the semidirect product $P=G_0\ltimes P_{+}$ of a
reductive Lie group $G_0$ and a normal subgroup $P_{+}\subset P$. The Lie
algebra $\p_{+}$  is the orthogonal complement of $\p$ in $\g$ with
respect to the Killing form, $P_{+}=\mathrm{exp}(\p_{+})$ and $G_0\cong
P/P_{+}$. Since $G_0$ is reductive,  its Lie algebra
$\g_0=\g_0^{ss}\oplus \mathfrak{z}(\g_0)$ decomposes into the semisimple
part $\g_0^{ss}=[\g_0,\g_0]$ and the centre $\mathfrak{z}(\g_0)$.
For parabolic geometries there is a natural choice of a normalisation
condition, which reads $\partial^*(\kappa)=0$, where
\begin{align*}\partial^*:\Lambda^k(\g/\p)^*\otimes \g\to\Lambda^{k-1}(\g/\p)^*\otimes\g\end{align*} is the
\emph{Kostant codifferential}  \cite{kostant-61}.  The \emph{harmonic curvature} $\kappa_{H}$ of a normal parabolic geometry is the image of $\kappa$ under the projection $\mathrm{ker} \partial^*\to\mathrm{ker}\partial/\mathrm{im}\partial^*$. 
The parabolic geometries we are mainly interested here (i.e. projective and conformal geometries) are automatically \emph{regular}, see \cite{cap-slovak-par}, and in that case the entire curvature $\kappa$ is completely determined by  $\kappa_{H}$.

A technical tool that we will often employ are Weyl structures for
parabolic geometries, cf. \cite{cap-slovak-weyl,cap-slovak-par} for a detailed account.
 A \emph{Weyl structure}\ of $(\G,\om)$ is a reduction of structure group $  j:\G_0 \embed\G$ of the $P$-principal bundle
  $\G$\ to a $G_0$-bundle $\G_0$.

Every Cartan connection $\omega$ naturally extends to a principal bundle
connection $\hat{\omega}$ on the $G$-principal bundle
$\hat{\mathcal{G}}=\mathcal{G}\times_{P}G$. The principal bundle
connection $\hat{\omega}$ induces a vector bundle connection
$\nabla^{\mathcal{V}}$ on each associated bundle
$\mathcal{V}=\mathcal{G}\times_{P}\mathbb{V}=\hat{\mathcal{G}}\times_{G}\mathbb{V}$
for a $G$-representation $\mathbb{V}$. Bundles $\mathcal{V}$ and
connections $\nabla^{\mathcal{V}}$ arising in this way are called
\emph{tractor bundles} and \emph{tractor connections}. The 
tractor connections induced by  normal Cartan connections
 for parabolic geometries
are called normal tractor connections.

\subsection{Normal solutions of first BGG-equations as parallel tractor sections}\label{sec-bgg}
In \cite{BGG-2001}, and later in a simplified manner in \cite{BGG-Calderbank-Diemer},\ it was shown that for a given tractor bundle $\mc{V}$ one can associate a natural sequence of differential operators,
\begin{align*}
  \HH_0\overset{{\Th_0^{\mc{V}}}}{\goesto}\HH_1\overset{{\Th_1^{\mc{V}}}}{\goesto}\cdots\overset{{\Th_{n-1}^{\mc{V}}}}{\goesto}
  \HH_n.
\end{align*}
The operators ${\Th_k^{\mc{V}}}$\ are the \emph{BGG-operators}, which operate between natural
sub-quotients $\HH_k$\ of $\Om^k(M,\VV)$.
We remark that ${\Th_k^{\mc{V}}}$\ form a complex if and only if the geometry $(\G,\om)$\ is locally flat.

We won't discuss the general construction here,
for which we refer to the articles mentioned above or \cite{mrh-thesis},
and just state the basic properties of the \emph{first BGG-operator} ${\Th_0^{\mc{V}}}:\Ga(\HH_0)\goesto\Ga(\HH_1)$.
The operator defines an overdetermined system of differential equations on $\si\in\Ga(\HH_0)$,
${\Th_0^{\mc{V}}}(\si)\overset{!}{=}0$, which is termed the \emph{first BGG-equation}.

For the projective and conformal structures we discuss below, we will be able to
encode a number of interesting geometric equations as first BGG-equations.
In those cases solutions of the first BGG-equations are always in $1:1$-correspondence
with parallel sections of the defining tractor bundle $\mc{V}$, cf. \cite{mrh-thesis}.
In general one only has $1:1$-correspondence between parallel sections and a subspace
of solutions of ${\Th_0^{\mc{V}}}(\si)=0$, which are called \emph{normal solutions}. 
This correspondence is realised as follows:
The bundle $\HH_0$\ is a natural quotient of $\mc{V}$,
$\mc{V}\overset{\Pi_0}{\goesto}\HH_0,$
and the BGG-construction defines a natural differential splitting operator
$\Ga(\mc{H}_0)\overset{{L_0^{\mc{V}}}}{\goesto}\Ga(\mc{V})$ of that projection.
Then a solution of ${\Th_0^{\mc{V}}}(\si)=0$\ is normal if and only if
$\na^{\mc{V}}{L^{\mc{V}}}(\si)=0$.

In the following we describe projective and conformal structures.
To write down explicit formulas it will be useful
to employ abstract index
notation, cf. \cite{penrose-rindler-87}: we write $\mb{E}_a=T^*M,\mb{E}^a=TM$ and
multiple indices as in $\mb{E}_{ab}=T^*M\t T^*M$\ denote tensor products.
Indices between squared brackets are skew, as in $\mb{E}_{[ab]}=\La^2 T^*M$,
and indices between round brackets are symmetric, as in $\mb{E}^{(ab)}=S^2 TM$.
\subsection{Projective Structures}\label{section-projective}

Let $M$\ be a manifold of dimension $n\geq 2$\ endowed with a projective class of 
torsion-free affine connections $[D]$: two connections $D$\ and $\hat D$\ are
projectively equivalent if they describe the same geodesics as unparameterised
curves. This is the case if and only if there is a $\Ups_a\in\mb{E}_a$\ such
that for all $\xi^a\in\mb{E}^a$,
\begin{align}\label{projectivehatD}
  \hat D_a\xi^b=D_a\xi^b+\Ups_a\xi^b+\Ups_p\xi^p \de_a^{\; b},
\end{align}
where $\de=\mr{id}_{TM}$\ is the Kronecker-symbol for the identity on $TM$,
cf. e.g. \cite{eastwood-matveev}\ and \cite{thomass}.

Let $R$\ be the curvature of $D$. With the Schouten tensor $\mr{P}\in\mb{E}_{(ab)}$,
\begin{align}\label{def-projectiveschouten}
  \mr{P}_{ab}=\frac{1}{n-1}R_{pa\;\; b}^{\;\;\ p}
\end{align}
one has the projective Weyl- and Cotton tensor
\begin{align}\label{formulaCprojective}
  C_{c_1c_2\; p}^{\quad\ a}&=R_{c_1c_2\; p}^{\quad\; a}+\mr{P}_{c_1p}\de_{c_2}^a-\mr{P}_{c_2p}\de_{c_1}^a,\\ \label{formulaAprojective}
  A_{ac_1c_2}&=2D_{[c_1}\mr{P}_{c_2]a}.
\end{align}

An oriented projective structure $(M,[D])$ is equivalently encoded in
a normal parabolic geometry of type
$(SL(n+1),P)$, where $P$ is the stabiliser of a ray in the
standard representation $\mathbb{R}^{n+1}$. This classical result
goes back to \'{E}.Cartan, \cite{cartan-projective}. For a modern
treatment we refer to \cite{sharpe,cap-slovak-par}.

The parabolic subgroup $P\subset G$ is a semidirect product
$P=\GL(n)\ltimes (\rr^n)^*$.
The $1$-dimensional representation of $P$
\begin{align*}
\GL(n)\ltimes(\rr^n)^*&\goesto \rr_+,\ 
(C,X)\mapsto \det(C)^{w\frac{n}{n+1}}
\end{align*}
is denoted by $\rr[w]$: the associated space
$\mb{E}[w]:=\G\times_P \rr[w]$\ are projective $w$-densities,
which are just usual densities with a suitable parametrisation.
If $V$\ is a $P$-representation and $\mc{V}=\G\times_P V$ its associated
bundle we will simply write $V[w]=V\t \rr[w]$ resp. $\mc{V}[w]=\mc{V}\t\mb{E}[w]$\
for the weighted versions of the modelling representation resp. the corresponding
associated bundles.

 For the projective geometry $(M,[D])$ any choice
of affine connection $D\in[D]$ yields a projective Weyl structure, and
in particular the structure group of any tractor bundle is reduced to $G_0=\SL(n)$.

\subsubsection{The projective standard tractor bundle}
This is the associated bundle $\mc{T}=\G\times_P\rr^{n+1}$.
With respect to a choice of $D\in[D]$ we have
$[\mc{T}]_{D}=
  \begin{pmatrix}
    \mb{E}[-1] \\
    \mb{E}^a[-1] 
  \end{pmatrix}$,
and $\Pi_0:\mc{T}\goesto \mb{E}^a[-1]=\HH_0^{\mc{T}}$ is the projectively invariant
projection to the lowest slot.
The tractor connection is given by
$  \na^{\mc{T}}_c
  \begin{pmatrix}
    \rh \\
    \si^a
  \end{pmatrix}
  =
  \begin{pmatrix}
    D_c\rh-\mr{P}_{cp}\si^p \\
    D_c\si^a+\rh \de_c^{\; a}
  \end{pmatrix}.
$
The BGG-splitting operator is
\begin{align*}
  L_0^{\mc{T}}:\mb{E}^a[-1]\goesto\ST,\ 
  \si^a\mapsto
  \begin{pmatrix} 
    -\frac{1}{n}D_p\si^p\\
    \si^a 
  \end{pmatrix}
\end{align*}
and the first BGG-operator of $\mc{T}$\ is
\begin{align}\label{proj-standardBGG}
  \Th_0^{\mc{T}}:\mb{E}^a[-1]\goesto{\mb{E}_0}_c^{\; a}[-1],\
  \si^a\mapsto D_c\si^a-\frac{1}{n}\de_c^{\; a}D_p\si^p.
\end{align}
Thus, $\ker\Th_0^{\mc{T}}$\ consists of vector fields
which are mapped to multiples of the identity by $D$.

\subsubsection{The projective dual standard tractor bundle}
The dual bundle to $\mc{T}$\ is $\mc{T}^*=\G\times_P{\rr^{n+1}}^*$.
Its decomposition under $D\in[D]$ is
$[\mc{T}^*]_{D}=
  \begin{pmatrix}
    \mb{E}_a[1] \\
    \mb{E}[1] 
  \end{pmatrix}
$
and $\Pi_0:\mc{\mc{T}}^*\goesto \mb{E}[1]=\HH_0^{\mc{T}^*}$ is the projectively invariant
projection to the lowest slot.
The tractor connection is 
$ \na^{\mc{T}^*}_c
  \begin{pmatrix}
    \ph_a \\
    \si
  \end{pmatrix}
  =
  \begin{pmatrix}
    D_c\ph_a +\mr{P}_{ca}\si \\
    D_c\si-\ph_c
  \end{pmatrix}.
$
The first splitting operator of $\mc{T}^*$ is
\begin{align*}
  L_0^{\mc{T}^*}:\mb{E}[1]\goesto\mc{T}^*,\ 
  \si\mapsto
  \begin{pmatrix}
    D_a\si \\
    \si
  \end{pmatrix}
\end{align*}
and the first BGG-operator is
\begin{align}\label{aRfequ}
  \Th_0^{\mc{T}^*}:\mb{E}[1]\goesto\mb{E}_{(ab)}[1],\ 
  \si \mapsto D_aD_b \si+\si \mr{P}_{ab}.
\end{align}

Let $\si\in\cinf(M)$\ be a solution of $\Th_0^{\mc{T}^*}(\si)=0$
and define $\Ups_a=D_a(\log \frac{1}{\lvert{\bar{\si}}\rvert})$.
Then $\Ups$\ is a well-defined $1$-form on $U:=M\backslash \si^{-1}(\{0\})$,
and one can form the connection $\hat D$,\eqref{projectivehatD}, that
is projectively equivalent to the restriction of $D$\ to $U$.
Then \eqref{aRfequ}\ implies, cf. \cite{mrh-thesis,cgh-projective},
that $\Ric(\hat D)=0$.
We will call any connection $\hat D$\ 
that is defined on an open-dense subset $U$ of $M$
and is contained in the 
restriction of $[D]$ to $U$ an \emph{almost Ricci-flat structure} 
of $[D]$, or $\hat D\in\mb{aRs}([D])$. It will sometimes
be useful to regard $\mb{aRs}([D])\subset[D]$, even though the
almost Ricci-flat structures only give connections on an open-dense
subset of $M$.
Then, cf. \cite{mrh-thesis,cgh-projective},
\begin{align}
  \label{aRsref}
  \mb{aRs}([D])\cong\ker \Th_0^{\mc{T}^*}.
\end{align}

\subsection{Conformal spin structures}\label{section-conformalspin}

A\emph{ conformal\ structure} of
signature $(n,n)$ on an $n=p+q$-dimensional
manifold $M$ is an equivalence class $\mc{C}$ of
pseudo-Riemannian metrics with two metrics $g$ and $\hat{g}$ being
equivalent if $\hat{g}=\mr{e}^{2f}g$ for a function $f\in
C^{\infty}(M)$.  Suppose we have a manifold with a conformal structure
of signature $(n,n)$.  Let $\mathcal{G}_0$ be the associated conformal
frame bundle with structure group the conformal group $\CO_o(n,n)=\rr_+\times \SO_o(n,n)$
preserving both orientations.  Then a \emph{conformal spin structure}
on $M$ is a reduction of structure group of $\mathcal{G}_0$ to
$\CSpin(n,n)=\mathbb{R}_{+} \times \Spin(n,n)$.
As for projective structures, it is useful to employ a suitable parametrisation
of densities: the \emph{conformal density bundles} $\mb{E}[w]$,
which are the line
bundles associated to the $1$-dimensional representations $(c,C)\mapsto
c^r\in\rr_+$\ of $\CSpin(n,n)=\rr_+\times \Spin(n,n)$.  

Let us now briefly introduce the main curvature quantities of the conformal
structure $\mc{C}$, cf. e.g. \cite{eastwood-notes-conformal}.
For $g\in\mc{C}$, let, with $m=2n$,
\begin{align*}
  P=\mr{P}(g):=\frac{1}{m-2}(\mr{Ric}(g)-\frac{\mr{Sc}(g)}{2(m-1)}g)
\end{align*}
be the \emph{Schouten\ tensor}; this is a trace modification
of the Ricci curvature $\mr{Ric}(g)$\ by a multiple
of the scalar curvature $\mr{Sc}(g)$. The trace of
the Schouten tensor is denoted $J=g^{pq}\PP_{pq}$.

It is well known that (since we always have dimension $\geq 4>3$), the complete obstruction against conformal flatness of $(M,\mc{C})$\ is the \emph{Weyl curvature}
\begin{align*}
  C_{ab\; d}^{\;\;\; c}:=R_{ab\; d}^{\;\;\; c}-2\de_{[a}^c\PP_{b]d}+2\bg_{d[a}\PP_{b]}^{\; c},
\end{align*}
where indices between square brackets are skewed over.

A conformal spin structures of signature $(n,n)$
is equivalently encoded in a normal parabolic
geometry of type $(Spin(n+1,n+1),\tilde{P}),$ where $\tilde{P}$ is the
stabiliser of a ray in $\mathbb{R}^{n+1,n+1}$.
Any choice of $g\in\mc{C}$ yields a Weyl structure of $(\G,\om)$,
and this reduces the structure group of a tractor bundle to $\ti G_0=\Spin(n,n)$.

\subsubsection{The conformal standard tractor bundle}\label{secstd}
This is the associated bundle $\ti{\mc{T}}=\ti\G\times_{\ti P}\rr^{n+1,n+1}$,
and with respect to $g\in\mc{C}$ it decomposes
$  [\ti{\mc{T}}]_g=
  \begin{pmatrix}
    \mb{E}[-1]\\
    \mb{E}_a[1] \\
    \mb{E}[1]
  \end{pmatrix},
$
and $\Pi_0:\ti{\mc{T}}\goesto \mb{E}[1]=\HH_0^{\ti{\mc{T}}}$ is the projectively invariant
projection to the lowest slot.
$\ti{\mc{T}}$ carries invariant tractor metric
$
  [\bf{h}]_g=
  \begin{pmatrix}
    0 & 0 & 1 \\
    0 & \bg & 0 \\
    1 & 0 & 0
  \end{pmatrix},
$
which is compatible with the standard tractor connection
$  [\na^{\ti{\mc{T}}}_c 
  \begin{pmatrix}
    \rh \\
    \ph_a \\
    \si
  \end{pmatrix}]_g
  =
  \begin{pmatrix}
    D_c \rh-\mr{P}_{c}^{\; b}\ph_b \\
    D_c\ph_a+\si \mr{P}_{ca}+\rh \bg_{c a}\\
    D_c \si-\ph_c
  \end{pmatrix}.
$
The BGG-splitting operator of $\ti{\mc{T}}$\ is
\begin{align}\label{splitStd}
  L_0^{\ti{\mc{T}}}:\mb{E}[1]\goesto \ti{\mc{T}},\
  \si\mapsto
  \begin{pmatrix}
    \frac{1}{2n}(\Lapl-J)\si \\
    D\si \\
    \si
  \end{pmatrix}
\end{align}
with the convention $\Lapl=-D^pD_p$.
The first BGG-operator is 
\begin{align}\label{EinsteinBGG}
  \Th_0^{\ti{\mc{T}}}:\mb{E}[1]\goesto{\mb{E}_0}_{(ab)}, \
  \si\mapsto (D_aD_b\si+\mr{P}_{ab}\si)_0.
\end{align}
It is well known that
\begin{align}\label{aEs-equ}
  (D_aD_b\si+\mr{P}_{ab}\si)_0=0 \ \equiv \ \si^{-2}\bg\ \mr{is\ Einstein\ on}\ U,
\end{align}
and we call the set of solutions of \eqref{aEs-equ}
the space of \emph{almost\ Einstein\ structures}\ of $\mc{C}$, cf. \cite{gover-aes},
i.e.:
\begin{align}\label{def-aEs}
  \mb{aEs}(\mc{C})=\ker \Th_0^{\ti{\mc{T}}}\subset\mb{E}[1].
\end{align}
It will sometimes be convenient to regard $\mb{aEs}(\mc{C})\subset\mc{C}$,
even if these Einstein-metrics are only defined on an open-dense subset.

\subsubsection{The spin tractor bundle}\label{sec-spintrac}
Since $\mc{C}$\ is a conformal spin structure
and modelled on a Cartan geometry of type $(\Spin(n+1,n+1),\ti P)$\
we can define the spin tractor bundle as $\ti{\mc{S}}=\ti\G\times_{\ti P}\De^{n+1,n+1}$.
Since we work in even signature, this decomposes into $\ti{\mc{S}}_{\pm}=\ti\G\times_{\ti P}\De_{\pm}^{n+1,n+1}$.
Under a choice of $g\in\mc{C}$\ the spin tractor bundles decompose as follows:
$
  [\ti{\mc{S}}_{\pm}]_g=
  \begin{pmatrix}
    S_{\mp}[-\frac{1}{2}] \\
    S_{\pm}[\frac{1}{2}]
  \end{pmatrix}.
$
$\Pi_0:\ti{\mc{S}}_{\pm}\goesto S_{\pm}[\frac{1}{2}]=\HH_0^{\ti{\mc{S}}_{\pm}}$ is the projectively invariant
projection to the lowest slot.
The Clifford action of the conformal standard tractor bundle $\ti{\mc{T}}$\ on $\ti{\mc{S}}$
is given by
\begin{align}\label{traCli}
    \begin{pmatrix} \rh \\ \ph_a \\ \si
  \end{pmatrix} \cdot
  \begin{pmatrix} \tau \\ \chi
    \end{pmatrix}
    =
  \begin{pmatrix} -\ph_a\cdot\tau+\sqrt{2}\rh \chi \\
\ph_a\cdot\chi-\sqrt{2}\si \tau
  \end{pmatrix},
\end{align}
cf. \cite{mrh-thesis,mrh-coupling}.
$\ti{\mc{S}}=\ti{\mc{S}}_{+}\oplus\ti{\mc{S}}_-$\ carries the spin tractor connections that is induced from
the standard tractor connection on $\ti{\mc{T}}$:
$  [\na^{\ti{\mc{S}}}_c
  \begin{pmatrix}
    \tau \\
    \chi
  \end{pmatrix}
  ]_g
  =
  \begin{pmatrix}
    D_c\tau+\frac{1}{\sqrt{2}}\PP_{cp}\ga^p\chi\\
    D_c\chi+\frac{1}{\sqrt{2}}\ga_c\tau
  \end{pmatrix}.
$

The BGG-splitting operator of $\ti{\mc{S}}_{\pm}$\ is
\begin{align}\label{twisplit} 
  L_0^{\ti{\mc{S}}_{\pm}}:\Ga(S_{\pm}[\frac{1}{2}])&\goesto \Ga(\ti{\mc{S}}_{\pm}), \
   \chi\mapsto
  \begin{pmatrix} 
    \frac{1}{\sqrt{2}n}\crd\chi \\ 
    \chi
  \end{pmatrix}.
\end{align}
Here
\begin{align*}
  &\crd:\Ga(S_{\pm})\goesto \Ga(S_{\mp}),\ 
  \crd:=\ga^pD_p,
\end{align*}
is the \emph{Dirac} operator.
The first BGG-operator is
\begin{align*}
  &\Th^{\ti{\mc{S}}}_0:\Ga(S_{\pm}[\frac{1}{2}])\goesto \Ga(T^*M\t S_{\pm}[\frac{1}{2}]),\\
  &\Th^{\ti{\mc{S}}}_0(\chi):=D\chi+\frac{1}{2n}\gamma\crd\chi.
\end{align*}
This is the \emph{twistor operator} (cf. e.g. \cite{baum-friedrich-twistors}),
which is alternatively described as the projection of the
Levi-Civita derivative of a spinor to the kernel of Clifford multiplication. The kernel of the twistor operator is called the space of twistor spinors $\mb{Tw}(\mc{C})$, and  
$\Pi_0$\ induces an isomorphism of the space of $\na^{\ti{\mc{S}}}$-parallel
sections of $\ti{\mc{S}}$\ with  $\mb{Tw}(\mc{C})$\ in
$\Ga(S[\frac{1}{2}])$.

\subsubsection{Conformal holonomy}
The \emph{conformal holonomy} of a conformal spin structure $\mc{C}$ is defined as 
  \begin{align}\label{defhol}
\Hol(\mc{C}):=\Hol(\nabla^{\ti{\mc{T}}})=\Hol(\nabla^{\ti{\mc{S}}})\subset\Spin(p+1,q+1).
\end{align}

\section{Fefferman-type constructions}\label{feff-const}
Let $\tilde{G}$ be a Lie group with Lie algebra $\frak{so}(p+1,q+1)$
and let $\tilde{P}\subset\tilde{G}$ be the stabiliser of a null-line $\ell\subset\mathbb{R}^{p+1,q+1}$.
Suppose we have an inclusion of Lie groups $i:G\hookrightarrow\tilde{G}$ with derivative $i:\g\to\tilde{\g}$. 
Assume  that the $G$-orbit $G\cdot o$ is open in $\tilde{G}/\tilde{P}$  and let  $P\subset G$ be a parabolic subgroup that contains the intersection $Q=G\cap \tilde{P}$. (In particular, this implies $\g/\p \cong \tilde{\g}/\tilde{\p}$ and $\tilde{\g}=\g+\tilde{\p}$).
This is the algebraic set up for Fefferman-type constructions as in \cite{cap-constructions}   inducing conformal structures of signature $(p,q)$.  

Since  Fefferman-type constructions have been studied quite intensively in the literature already, we recall the general construction here only briefly and refer to the literature  (e.g. \cite{cap-gover-cr-tractors} and \cite{cap-slovak-par}) for details.
Let $(\calG\to M,\omega)$ be a parabolic geometry of type $(G,P)$. One can form the correspondence space $\tilde{M}=\calG/Q=\calG\times_{P}P/Q$. 
The projection $\calG\to\tilde{M}$ is a $Q$-principal bundle, and from the defining properties of a Cartan connection one sees that $\omega\in\Omega^1(\mathcal{G},\g)$ is a Cartan connection also on  $\calG\to\tilde{M}$. So $(\mathcal{G}\to\tilde{M},\omega)$ is a Cartan geometry of type $(G,Q)$. As a next step, one considers the extended bundle $\tilde{\calG}=\calG\times_{Q}\tilde{P}$ with respect to the inclusion $Q\hookrightarrow \tilde{P}$.
 This is a principal bundle over $\tilde{M}$ with structure group $\tilde{P}.$
 Equivariant extension of $\omega$ yields a unique Cartan connection $\tilde{\omega}\in\Omega^1(\tilde{\calG},\tilde{\g})$ that restricts to $\omega$ on $\mathcal{G}$. Thus, one obtains a functor from parabolic geometries $(\calG\to M,\omega)$  of type $(G,P)$ to parabolic geometries $(\tilde{\calG}\to\tilde{M},\tilde{\omega})$ of type $(\tilde{G},\tilde{P})$.

\subsection{Normality}Next we 
derive a criterion suitable for our purposes that tells when this Fefferman-construction assigns a \emph{normal} conformal geometry $(\tilde{\calG},\tilde{\omega})$ to a regular, normal parabolic geometry $(\calG,\omega)$.
We will throughout  assume that the restriction of the Killing form $\tilde{B}$ of $\tilde{\g}$ to $\g$ is a non-zero multiple of the Killing form  $B$ of $\g$ (which is true for the inclusions  we are interested in).
We use $\tilde{B}$ to identify $(\g/\p)^*\cong\p_{+}$
and $(\tilde{\g}/\tilde{\p})^*\cong\tilde{\p}_{+}.$ Let $X_1,\cdots,X_n\in\g$ be elements inducing a basis of $\g/\p$ and extend these elements by $ X_{n+1},\cdots,X_{m}\in\p$ such that $X_1,\cdots, X_{m}$ induce a basis of 
$\g/\q\cong \tilde{\g}/\tilde{\p}.$
 Let $Z_1,\dots, Z_n$ be the dual basis of  $X_1,\cdots,X_n$ in $(\g/\p)^*\cong\p_{+}$ and $\tilde{Z}_1,\dots,\tilde{Z}_m$ be the dual basis of $X_1,\cdots, X_{m}$ in $(\tilde{\g}/\tilde{\p})^*\cong\tilde{\p}_{+}$. Then $\tilde{Z}_j-Z_j$ for $j=1,\dots,n$ are contained in the orthogonal complement $\g^{\perp}\subset\tilde{\g}$ with respect to the Killing form: For  $i=1,\dots,n,$ we have 
\begin{align*}
\tilde{B}(X_i,\tilde{Z}_j-Z_j)=\tilde{B}(X_i,\tilde{Z}_j)-\tilde{B}(X_i,Z_j)=\delta_{i,j}-\delta_{i,j}=0.
\end{align*}
For  $i=n+1,\dots,m,$ we have $\tilde{B}(X_i,\tilde{Z}_j)=0$ since $i\neq j$ and $\tilde{B}(X_i,Z_j)=0$ since 
$X_i\in \p$ and $Z_j\in\p_{+}$. Finally, we have $\tilde{B}(\q,\tilde{Z}_j)=0$ since $\q\subset \tilde{\p}$ and $\tilde{Z}_j\in\tilde{\p}_{+}$ and $\tilde{B}(\q,Z_j)=0$ since 
$\q\subset \p$ and $Z_j\in\p_{+}$.

Now suppose $\kappa:\calG\to\Lambda^2(\g/\p)^*\otimes\g$ is the curvature function of a normal parabolic geometry of type $(G,P)$. The normality condition reads
\begin{equation}\begin{aligned}
\partial^*(\kappa)(u)(X)=\partial^*_1(\kappa)(u)(X)&+\partial^*_2(\kappa)(u)(X)\\= 2\sum_{i=1}^{n}[\kappa(u)(X_i,X),Z_i]&+\sum_{i=1}^{n}\kappa(u)([X_i,[Z_i,X]])=0
\end{aligned}
\end{equation}
for all $u\in\calG$ and $X\in\g$.
Let $\tilde{\kappa}:\tilde{\calG}\to\Lambda^2(\tilde{\g}/\tilde{\p})^*\otimes\g$ be the curvature function of the associated conformal geometry. This geometry is normal if and only if 
\begin{align}
 \tilde{\partial}^*\tilde{\kappa}(\tilde{u})(\tilde{X})=2\sum_{i=1}^{m}[\tilde{\kappa}(\tilde{u})(X_i,\tilde{X}),\tilde{Z}_i]=0
\end{align}
for all $\tilde{u}\in\tilde{\calG}$ and $\tilde{X}\in\tilde{\g}$. By construction, we know that  $\tilde{\kappa}$ is a $\tilde{P}$-equivariant extension of $\kappa$  and elements of $\p$ insert trivially into $\tilde{\kappa}$. Since also $\tilde{\partial}^*$ is $\tilde{P}$-equivariant, to prove normality of $\tilde{\kappa}$ it suffices  to verify that 
\begin{align}\label{norm}
\tilde{\partial}^*\tilde{\kappa}(u)(X)= 2\sum_{i=1}^{n}[\tilde{\kappa}(u)(X_i,X),\tilde{Z}_i]=2\sum_{i=1}^{n}[\kappa(u)(X_i,X),\tilde{Z}_i]=0
\end{align}
for all for all $u\in\calG$ and $X\in\g$.

\begin{prop}\label{prop-norm}
Suppose that the parabolic geometry $(\mathcal{G},\omega)$ of type $(G,P)$ is regular and normal, the curvature function $\kappa$ takes values in $\Lambda^2(\g/\p)^*\otimes (\g\cap\tilde{\p})$ and the two summands in the normality condition vanish separately, i.e. $\partial^*_1(\kappa)=\partial^*_2(\kappa)=0$. Then $\tilde{\partial}^*(\tilde{\kappa})=0$, i.e. the induced conformal parabolic geometry is normal.
\end{prop}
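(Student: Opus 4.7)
The starting point is the reduction in the paragraph preceding the statement: by $\tilde{P}$-equivariance of both $\tilde{\kappa}$ and $\tilde{\partial}^{*}$, the normality of $\tilde{\omega}$ amounts to the vanishing of
\begin{align*}
2\sum_{i=1}^{n}[\kappa(u)(X_{i},X),\tilde{Z}_{i}]
\end{align*}
for all $u\in\mathcal{G}$ and $X\in\g$. My plan is to decompose each $\tilde{Z}_{i}=Z_{i}+(\tilde{Z}_{i}-Z_{i})$, with the first summand lying in $\p_{+}\subset\g$ and the second in $\g^{\perp}$ as verified in the preceding discussion. This splits the sum into two pieces that I would match, respectively, against $\partial^{*}_{1}(\kappa)$ and $\partial^{*}_{2}(\kappa)$.

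The $Z_{i}$-piece $2\sum_{i=1}^{n}[\kappa(u)(X_{i},X),Z_{i}]$ is exactly $\partial^{*}_{1}(\kappa)(u)(X)$, and vanishes by the first part of the hypothesis. The bulk of the work concerns the correction
\begin{align*}
2\sum_{i=1}^{n}[\kappa(u)(X_{i},X),\tilde{Z}_{i}-Z_{i}].
\end{align*}
Here the three ingredients I would combine are: (a) the hypothesis $\kappa(u)\in\Lambda^{2}(\g/\p)^{*}\otimes\q$, placing each $\kappa(u)(X_{i},X)$ in $\g\cap\tilde{\p}$; (b) the $\mathrm{ad}$-invariance of $\g^{\perp}$ under $\g$, which keeps each bracket inside $\g^{\perp}$; and (c) the invariance of $\tilde{B}$ together with the Killing-duality $\tilde{B}(X_{j},\tilde{Z}_{i})=\delta_{ij}$. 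These let one rewrite, after summation over $i$, the single-bracket correction in the form of the iterated-bracket expression $\sum_{i}\kappa(u)([X_{i},[Z_{i},X]])$ that defines $\partial^{*}_{2}(\kappa)(u)(X)$, which vanishes by the second part of the hypothesis.

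The principal obstacle is precisely this last identification. The elements $\tilde{Z}_{i}-Z_{i}$ live in $\g^{\perp}$, which is not closed under the bracket of $\tilde{\g}$, so the passage from a single bracket against $\tilde{Z}_{i}-Z_{i}$ to an iterated bracket involving $X_{i}$ and $Z_{i}$ is not purely formal: it needs the structural inclusions $\q\subset\p$ and $\q\subset\tilde{\p}$ together with the decomposition $\tilde{\g}=\g+\tilde{\p}$ to be exploited explicitly in order to re-express the dual vectors $\tilde{Z}_{i}-Z_{i}$ through the basis $X_{n+1},\dots,X_{m}$. Once this bookkeeping is done, the two separate vanishings $\partial^{*}_{1}(\kappa)=\partial^{*}_{2}(\kappa)=0$ give $\tilde{\partial}^{*}(\tilde{\kappa})=0$ as required.
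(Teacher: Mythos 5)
Your set-up is right and matches the paper's up to the point where you split $\tilde{Z}_i = Z_i + (\tilde{Z}_i - Z_i)$ and dispose of the $Z_i$-part via $\partial^*_1(\kappa)=0$. The gap is in your treatment of the remaining term
\begin{align*}
2\sum_{i=1}^{n}[\kappa(u)(X_i,X),\tilde{Z}_i-Z_i].
\end{align*}
You propose to rewrite this as the iterated-bracket expression $\sum_i\kappa(u)([X_i,[Z_i,X]])$ and invoke $\partial^*_2(\kappa)=0$. That identification cannot hold: by your own observation (b), each bracket $[\kappa(u)(X_i,X),\tilde{Z}_i-Z_i]$ lies in $[\g,\g^{\perp}]\subset\g^{\perp}$, so the whole sum lies in $\g^{\perp}$, whereas $\partial^*_2(\kappa)(u)(X)$ is built from values of $\kappa$ and hence lies in $\g$ (indeed in $\g\cap\tilde{\p}$ by hypothesis). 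Since $\tilde{B}|_{\g}$ is a non-zero multiple of the non-degenerate Killing form of $\g$, one has $\g\cap\g^{\perp}=\{0\}$, so the two expressions can agree only if both vanish --- which is what is to be proved. The ``bookkeeping'' you defer is therefore not a computation that can be completed.

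The step you are missing is simpler and does not use $\partial^*_2(\kappa)=0$ as a separate input (given normality of $\kappa$, $\partial^*_1(\kappa)=0$ already forces $\partial^*_2(\kappa)=0$). Besides lying in $\g^{\perp}$, the expression $\tilde{\partial}^*\tilde{\kappa}(u)(X)=2\sum_{i=1}^{n}[\kappa(u)(X_i,X),\tilde{Z}_i]$ also lies in $\tilde{\p}_{+}$: by hypothesis $\kappa(u)(X_i,X)\in\g\cap\tilde{\p}\subset\tilde{\p}$, each $\tilde{Z}_i\in\tilde{\p}_{+}$, and $\tilde{\p}_{+}$ is an ideal in $\tilde{\p}$. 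Finally $\g^{\perp}\cap\tilde{\p}_{+}=\{0\}$: since $\tilde{\p}_{+}=\tilde{\p}^{\perp}$, an element of this intersection is orthogonal to $\g+\tilde{\p}=\tilde{\g}$ and hence zero by non-degeneracy of $\tilde{B}$. Combining the two memberships forces $\tilde{\partial}^*\tilde{\kappa}=0$, which is exactly how the paper concludes.
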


\begin{proof}
Using that $\partial^*_1(\kappa)(u)(X)=2\sum_{i=1}^{n}[\kappa(u)(X_i,X),Z_i]=0$ and \eqref{norm},  we can rewrite $\tilde{\partial}^*\tilde{\kappa}(u)(X)$ as
 \begin{align}
2\sum_{i=1}^{n}[\kappa(u)(X_i,X),\tilde{Z}_i-Z_i].
 \end{align}
 We have observed that $\tilde{Z}_i-Z_i\in\g^{\perp}$ and by construction $\kappa(u)(X_i,X)\in\g$. Since the decomposition $\tilde{\g}=\g\oplus\g^{\perp}$ is invariant under the action of $\g$, this implies that 
 $\tilde{\partial}^*\tilde{\kappa}(u)(X)=\sum_{i=1}^{n}[\kappa(u)(X_i,X),\tilde{Z}_i-Z_i]\in \g^{\perp}$. On the other hand, since by assumption $\tilde{\kappa}(u)(X_i,X)\in\tilde{\p}$ and $\tilde{Z}_i\in\tilde{\p}_{+}$, we have $\tilde{\partial}^*\tilde{\kappa}(u)(X)\in\tilde{\p}_{+} $. 
 But  the intersection $\g^{\perp}\cap\tilde{\p}_{+}$ is zero:  Note that $\tilde{\p}_{+}=\tilde{\p}^{\perp}$, so any element in $\g^{\perp}\cap\tilde{\p}_{+}$ is orthogonal to $\g+\tilde{\p}=\tilde{\g}$. Since the Killing form is non-degenerate this implies $\g^{\perp}\cap\tilde{\p}_{+}={0}$ and we conclude that $\tilde{\partial}^*\tilde{\kappa}=0$.
\end{proof}

\begin{rema}
Suppose $\kappa$ is torsion-free, then Corollary 3.2 in \cite{cap-correspondence} shows that  it suffices to check that both $\partial^*_1$ and $\partial^*_2$ annihilate the harmonic curvature to conclude that they annihilate $\kappa$. 
If there is only one harmonic curvature component, then always $\partial^*_1(\kappa_{H})=\partial^*_2(\kappa_{H})=0.$ The reason for this is that the two summands $\partial^*_1(\kappa_{H})(u)(X)$ and $\partial^*_2(\kappa_{H})(u)(X)$ are contained in different grading components and cannot cancel.
\end{rema}

\section{From projective to conformal structures of signature $(n,n)$}\label{splitsig}
\subsection{The construction }\label{construction} For this construction
denote by $\De=\De_+^{n+1,n+1}\oplus\De_-^{n+1,n+1}$ the real $2^{n+1}$-dimensional
spin representation of $\tilde{G}=Spin(n+1,n+1)$. Then we fix two pure spinors $s_F\in\De_-^{n+1,n+1}, s_E\in\De_{\pm}^{n+1,n+1}$ with non-trivial pairing - here $s_E$\ lies
in $\De_+^{n+1,n+1}$\ if $n$ is even or $\De_-^{n+1,n+1}$ if $n$ is odd,
cf. \cite{baum-pseudospin}.
These assumptions guarantee 
that the kernels $E,F\subset\rr^{n+1,n+1}$\ of $s_E,s_F$\ with respect
to Clifford multiplication are complementary maximally isotropic subspaces.
Let now 
\begin{align*}G:=\{g\in \Spin(n+1,n+1): g\cdot s_E=s_E,g\cdot s_F=s_F\}\cong\SL(n+1),
\end{align*} and this defines an embedding
\begin{align*}
  \SL(n+1)\overset{i}{\embed}\Spin(n+1,n+1).
\end{align*}
Under $\SL(n+1)$ the space $\rr^{n+1,n+1}$ then decomposes into a copy
of the standard representation and the dual representation:
\begin{align}\label{decompR}
  \rr^{n+1,n+1}=E\oplus F=\rr^{n+1}\oplus{\rr^{n+1}}^*.
\end{align}
Note that this decomposition determines a $G$-invariant skew-symmetric involution $\mathbb{K}\in\Lambda^2\mathbb{R}^{n+1} $ acting by the identity on $E$ and minus the identity on $F$. In particular an embedding of
$\SL(n+1)$ can also be defined via such an involution.

We will realise $\Spin(n+1,n+1)$\ with respect
to the split signature form
\begin{align}\label{form(n,n)}
h= \begin{pmatrix}
                      0 & I_{n+1} \\
                      I_{n+1} & 0   \end{pmatrix},
\end{align}
such that the corresponding inclusion on the Lie algebra level is given by
\begin{align}
\notag \sl(n+1)&\embed \so(n+1,n+1)\\
A &\mapsto  \begin{pmatrix}
                      A& 0 \\
                      0 & -(A^t)  \end{pmatrix}.
\end{align}
Let $\tilde{P}\subset \tilde{G}$ be the stabiliser of the ray $\mathbb{R}_{+}\tilde{v}_{+}$ through the null-vector 
\begin{align}\label{tildev+}
\tilde{v}_{+}=\begin{pmatrix}
1 &
0&
\cdots&
0&
1
\end{pmatrix}^t\in\rr^{n+1,n+1}.
\end{align}
Then the group $Q:=i^{-1}(\tilde{P})\subset G$ consists of matrices of the form
\begin{align}
 \begin{pmatrix}
                      a&Z^t&b\\
                      0 & A &Y\\
                      0&0& a^{-1}  \end{pmatrix},
\end{align}
with $a\in \rr_{+}$, $b\in \rr$, $Z,Y\in\rr^{n-1}$ and $A\in \SL(n-1)$. This group $Q,$ which is not a parabolic subgroup, is contained in the parabolic subgroup $P\subset G,$ of the form
\begin{align}\label{P}
 \begin{pmatrix}
                      a&Z^t&b\\
                      0 & A &Y\\
                      0&X^t& c  \end{pmatrix},
\end{align}
defined as the stabilizer in $G$ of the ray $\mathbb{R}_{+} v_{+}$ through the vector 
\begin{align}
v_+=\begin{pmatrix}
1 &
0&
\cdots &
0
\end{pmatrix}^t\in\rr^{n+1}.
\end{align}
We denote by $\tilde{\g}, \tilde{\p}, \g, \p, \q$ the Lie algebras of the groups introduced above. Dimension count shows that the derivative $i':\g\to\tilde{\g}$ of  the inclusion $i:G\hookrightarrow\tilde{G}$ induces an isomorphism
$\g/\q\cong \tilde{\g}/\tilde{\p}.$ Hence the orbit $G\cdot o\subset \tilde{G}/\tilde{P}$ is open.
(But the action of $G$ on $\tilde{G}/\tilde{P}$ is not transitive; in addition to the open orbit  there are two lower dimensional orbits.)
That means that we can perform a Fefferman-type construction (as explained in \ref{feff-const}) from parabolic geometries of type $(G,P)$ on  to parabolic geometries of type $(\tilde{G},\tilde{P}).$
Since every parabolic geometry of type $(\tilde{G},\tilde{P})$ determines an underlying conformal spin structure (see e.g. \cite{cap-slovak-par}), this yields a construction of  a conformal spin structure on the correspondence space $\tilde{M}$ over a projective manifold $M$.

Let us describe the  correspondence space $\tilde{M}=\calG\times_{P} P/Q$ more carefully. Via the Cartan connection $\omega\in\Omega^1(\calG,\mathfrak{g})$,
the cotangent bundle $T^*M$ can be identified with $\calG\times_P(\mathfrak{g}/\mathfrak{p})^*$. Consider an arbitrary element $p\in P$; it is of the form
\begin{align}
p=\begin{pmatrix}a&Y^t\\
0&A
\end{pmatrix}
\end{align}
 for some $A\in \GL^{+}(n)$,
$a=(\mathrm{det} A)^{-1}$ and $Y\in\mathbb{R}^n$. The $P$-representation on $(\mathfrak{g}/\mathfrak{p})^*\cong\mathbb{R}^n$ is given by $\rho(p)(Y)=a (A^{-1})^{t} Y$.
If we  form the tensor product of this representation on $(\g/\p)^*$ with the 1-dimensional representation given by $\rho(p)=(\mathrm{det} A)^2=(a^{-1})^2$, then the resulting representation is $\rho(p)(Y)=a^{-1} (A^{-1})^{t} Y$. The corresponding representation space shall be denoted by $(\g/\p)^*[2]$. The action defined by this representation  is transitive on $(\mathfrak{g}/\mathfrak{p})^*[2]\backslash \{0\}$, and the isotropy subgroup of 
$e_n\in\mathbb{R}^{n}\cong(\mathfrak{g}/\mathfrak{p})^*[2]\backslash \{0\}$ is the group $Q$. Thus we may identify the correspondence 
space $\tilde{M}$ with $T^*M[2]\backslash\{0\}$.

\begin{prop}\label{(n,n)const}
 The Fefferman-type construction for the pairs of Lie groups $(G,P)$ and $(\tilde{G},\tilde{P})$ as above naturally associates a conformal spin structure of signature $(n,n)$ on $\tilde{M}=T^*M[ 2]\backslash \{0\}$ to an $n$-dimensional projective structure on $M$.
\end{prop}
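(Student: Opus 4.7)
The plan is to verify that the algebraic prerequisites for the Fefferman-type construction from Section \ref{feff-const} are met for the pair of inclusions $(G,P)\hookrightarrow(\tilde{G},\tilde{P})$ fixed above, and then to read off the geometric consequences from the general machinery.

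First I would check the two algebraic hypotheses: (i) the derivative $i':\g\to\tilde{\g}$ induces an isomorphism $\g/\q\cong\tilde{\g}/\tilde{\p}$ (equivalently, $G$ acts with open orbit on $\tilde{G}/\tilde{P}$), and (ii) the isotropy $Q:=G\cap\tilde{P}$ is contained in the parabolic $P\subset G$. Both facts have already been made explicit in the setup preceding the proposition: the dimension count yields the isomorphism $\g/\q\cong\tilde{\g}/\tilde{\p}$, and the inclusion $Q\subset P$ is immediate from the block-matrix descriptions of $Q$ and $P$ given above.

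Given these hypotheses, the general Fefferman-type machinery of Section \ref{feff-const} applies verbatim. Starting from a projective structure on $M$, encoded by the canonical normal parabolic geometry $(\calG\to M,\omega)$ of type $(G,P)$, one forms the correspondence space $\tilde{M}:=\calG/Q=\calG\times_P P/Q$, observes that $\omega$ remains a Cartan connection on $\calG\to\tilde{M}$ (now of type $(G,Q)$), and equivariantly extends it along $Q\hookrightarrow\tilde{P}$ to obtain the Cartan geometry $(\tilde{\calG}:=\calG\times_Q\tilde{P},\tilde{\omega})$ of type $(\tilde{G},\tilde{P})$ on $\tilde{M}$. Because $\tilde{G}=\Spin(n+1,n+1)$ (and not merely $\SO(n+1,n+1)$) and $\tilde{P}\subset\tilde{G}$ is the stabiliser of a null ray, the underlying structure of such a parabolic geometry is precisely a conformal spin structure of signature $(n,n)$, cf. \cite{cap-slovak-par}. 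Crucially, the spin refinement is automatic because the embedding $i:\SL(n+1)\embed\Spin(n+1,n+1)$ was arranged through a pair of pure spinors $s_E,s_F$ rather than by an inclusion into $\SO$.

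It remains to identify the abstract correspondence space $\tilde{M}$ with $T^*M[2]\setminus\{0\}$. For this I would rely on the $P$-action analysis carried out just before the statement: under the $P$-representation on $(\g/\p)^*[2]\cong\rr^n$ the action is transitive on $(\g/\p)^*[2]\setminus\{0\}$ with isotropy at $e_n$ equal to $Q$, hence $P/Q\cong(\g/\p)^*[2]\setminus\{0\}$ as $P$-spaces. Combining this with the standard identification $T^*M=\calG\times_P(\g/\p)^*$ gives
\begin{align*}
\tilde{M}=\calG\times_P P/Q\cong\calG\times_P\bigl((\g/\p)^*[2]\setminus\{0\}\bigr)\cong T^*M[2]\setminus\{0\},
\end{align*}
which completes the proof.

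No serious obstacle is anticipated; the argument is essentially bookkeeping once the algebraic inclusions have been put in place. The only subtle point is that the construction really produces a conformal \emph{spin} structure, but this follows at once from the fact that $G$ is embedded in the spin group and not merely in the special orthogonal group, so that $\tilde{\calG}=\calG\times_Q\tilde{P}$ carries a reduction to the spinorial parabolic subgroup.
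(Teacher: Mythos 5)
Your proposal is correct and follows essentially the same route as the paper, which proves this proposition implicitly through the discussion immediately preceding it: verifying the open-orbit condition via the dimension count $\g/\q\cong\tilde{\g}/\tilde{\p}$, noting $Q\subset P$ from the block-matrix forms, invoking the general Fefferman-type machinery together with the fact that parabolic geometries of type $(\Spin(n+1,n+1),\tilde{P})$ induce conformal spin structures, and identifying $\tilde{M}=\calG\times_P P/Q$ with $T^*M[2]\backslash\{0\}$ via the transitive $P$-action on $(\g/\p)^*[2]\backslash\{0\}$ with isotropy $Q$. Nothing is missing.
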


\subsection{Induced structures on the conformal Fefferman space}\label{sec-induced}
Let $L=\mathbb{R}\tilde{v}_+$ be the line spanned by the null-vector $\tilde{v}_+$ and let  $L^{\perp}$ be the orthogonal complement with respect to $h$. Consider $\bar{E}=E\cap L^{\perp}$ and $\bar{F}= F\cap L^{\perp}$. 
The line $L$ is neither contained in $\bar{E}$ nor $\bar{F},$  and these two subspaces induce $n$-dimensional isotropic subspaces $\ee, \ef$ in $L^{\perp}/L$ with $1$-dimensional intersection $\mathrm{k}$.

We have a $\q$-invariant  identification $\g/\q\cong L^*\otimes L^{\perp}/L$ via $X\mapsto (\tilde{v}_+)^*\otimes X\cdot \tilde{v}_+$. Under this identification 
the subspace $f=\p/\q\subset \g/\q$ corresponds to $L^*\otimes \ef$. We denote by $e\subset\g/\q$ the subspace corresponding to $L^*\otimes \ee$, and then $e\cap f=\p'/\q\subset \g/\q$, where $\p'$ is the Lie algebra of $P'$ as in \eqref{P'}, which corresponds to $L^*\otimes \mathrm{k}$.
The $G$-invariant involution $\mathbb{K}\in\so(n+1,n+1)=\ti\g$\ defines
a $Q$-invariant element $k:=\mathbb{K}/\ti\p\in\ti\g/\ti\p\cong \g/\q$, which spans the $1$-dimensional intersection $e\cap f$.
The sum $e + f$ coincides with the orthogonal complement of $k$ in $\g/\q$.
Thus
\begin{align}
k\in e\cap f \subset k^{\perp}=e + f\subset \g/\q,
\end{align}
 both $e$\ and $f$\ are maximally isotropic (of dimension $n$) in $\g/\q,$ and in particular $k$ is null.
 
 It follows, that the tangent bundle $T\tilde{M}=\mathcal{G}\times_{Q}\g/\q$ has two  $n$-dimensional isotropic subbundles with one-dimensional intersection, corresponding to $e$ and $f$ and $e\cap f$. The bundle $\mathcal{G}\times_{Q}f$ is the vertical bundle for the projection $\tilde{M}\to M$.

The geometric tractor objects corresponding to the $G$-invariant algebraic data introduced in the beginning of \ref{construction}  will be denoted as follows:
The conformal standard tractor bundle $\tilde{\mathcal{T}}=\tilde{\mathcal{G}}\times_{\tilde{P}}\mathbb{R}^{n+1,n+1}=\mathcal{G}\times_{Q}\mathbb{R}^{n+1,n+1}$ naturally decomposes
as 
\begin{align}\label{decompT}
 \tilde{\mathcal{T}}=\mathcal{\tilde{E}}\oplus\mathcal{\tilde{F}}.
\end{align}
 The involution $\mathbb{K}$ gives rise to an adjoint tractor $\mathbf{K}\in\Gamma(\Lambda^2\tilde{\mathcal{T}})$ and the invariant spinors give rise to (pure) spin tractors $\mathbf{s}_{E}\in\Gamma(\tilde{\mathcal{S}}_{\pm})=\Gamma(\mathcal{G}\times_{Q}\Delta_{\pm})$ and $\mathbf{s}_{F}\in\Gamma(\tilde{\mathcal{S}}_{-})=\Gamma(\mathcal{G}\times_{Q}\Delta_{-})$ with non-trivial pairing, cf. \ref{construction}.

The conformal Cartan connection $\tilde{\omega}\in\Omega^1(\tilde{\mathcal{G}},\g)$ obtained via the Fefferman construction induces a tractor connection $\nabla^{\tilde{\mathcal{V}}}$ on each conformal tractor bundle $\tilde{\mathcal{V}}$. 
By construction, the decomposition of the tractor bundle \eqref{decompT} is
preserved by the induced conformal tractor connection and the adjoint tractor $\mb{K}$ and the spin tractors $\mb{s}_E, \mb{s}_F$ are all parallel with respect to the induced tractor connections on the respective bundles.
Note that we have not made any claims yet as to whether the  additional structure on the conformal tractor bundles is preserved by the normal conformal tractor connection $\nabla^{\tilde{\mathcal{V}},nor}$, which is a priori different from $\nabla^{\tilde{\mathcal{V}}}$.

\subsection{Relation between projective and conformal parallel tractors}\label{Relpartra} 
Suppose $\mathbb{V}$ is a $\tilde{G}$ representation, which is then also a $G$ representation, since $G\subset\tilde{G}$. Let  $\mathcal{V}=\mathcal{G}\times_{P}\mathbb{V}\to M$ be the associated projective tractor bundle and let $\tilde{\mathcal{V}}=\tilde{\mathcal{G}}\times_{\tilde{P}}\mathbb{V}=\mathcal{G}\times_{Q}\mathbb{V}\to \tilde{M}$ be the associated conformal tractor bundle.  
Let $\nabla^{\mathcal{V}}$ and $\nabla^{\tilde{\mathcal{V}}}$ be the tractor connections induced by $\omega$ and $\tilde{\omega}$. Sections of $\mathcal{V}$  bijectively correspond to $P$-equivariant functions $f:\mathcal{G}\to\mathbb{V}$, while sections  of $\tilde{\mathcal{V}}$ correspond to $Q$-equivariant functions $f:\mathcal{G}\to\mathbb{V}$. In particular, since $Q\subset P$, every section of $\mathcal{V}$ gives rise to a section of $\tilde{\mathcal{V}}$, and we can view $\Gamma(\mathcal{V})\subset\Gamma(\tilde{\mathcal{V}})$.

Conversely,   the proof of Proposition 3.3 in \cite{cap-gover-cr-tractors}  applied to our setting shows that  a section $\tilde{s}\in\Gamma(\tilde{\mathcal{V}})$ is contained in
$\in\Gamma(\mathcal{V})$ (i.e. the corresponding $Q$-equivariant function is actually $P$-equivariant) iff $\nabla^{\tilde{\mathcal{V}}}_{\xi}\tilde{s}=0$ for all $\xi$ in the vertical bundle of $\tilde{M}\to M$. The proof further shows that the tractor connection $\nabla^{\tilde{\mathcal{V}}}$ restricts to a connection on $\Gamma(\mathcal{V})\subset\Gamma(\tilde{\mathcal{V}})$, which coincides with $\nabla^{\mathcal{V}}$.
This implies  a bijective correspondence between $\nabla^{\tilde{\mathcal{V}}}$-parallel tractors in $\Gamma(\tilde{\mathcal{V}})$ and $\nabla^{\mathcal{V}}$-parallel tractors in $\Gamma(\mathcal{V})$.
If $\mathbb{V}$ is irreducible  as a $\tilde{G}$-representation but has a $G$-invariant subspace $\mathbb{W}\subset\mathbb{V},$ then this correspondence restricts to a bijective correspondence between parallel sections of $\mathcal{\tilde{W}}=\mathcal{G}\times_{Q}\mathbb{W}\to\tilde{M}$ and parallel sections of $\mathcal{W}=\mathcal{G}\times_{P}\mathbb{W}\to M$.

\subsection{Exceptional case: Dimension two}
In the special case of a projective structure in dimension $n=2$ the curvature function of a normal projective Cartan connection takes values in $\Lambda^2(\g/\p)^*\otimes\p_{+}$, see e.g. \cite{cap-slovak-par}. It is easily seen from the explicit matrices that $\p_{+}\subset\tilde{\p}\cap \g$. We can thus apply Proposition \ref{prop-norm} in this case, which shows:
\begin{prop}\label{prop-dim2}
 Suppose we are given a normal parabolic geometry $(\mathcal{G},\omega)$ encoding a two-dimensional projective structure. Then the associated conformal parabolic geometry $(\tilde{\mathcal{G}},\tilde{\omega})$ is normal, and thus   $\nabla^{\tilde{\mathcal{V}},nor}=\nabla^{\tilde{\mathcal{V}}}$ for any tractor bundle $\tilde{\mathcal{V}}$.
\end{prop}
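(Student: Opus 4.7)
The strategy is to invoke Proposition \ref{prop-norm}, so I must verify its two hypotheses for the projective curvature $\kappa$: first, that $\kappa$ takes values in $\Lambda^2(\g/\p)^*\otimes (\g\cap\tilde{\p})$, and second, that the two summands of the Kostant codifferential, $\partial^*_1(\kappa)$ and $\partial^*_2(\kappa)$, each vanish.

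For the first hypothesis, I would simply combine two facts that are already supplied in the excerpt: the quoted fact (from \cite{cap-slovak-par}) that in dimension two a normal projective Cartan connection has curvature in $\Lambda^2(\g/\p)^*\otimes \p_{+}$, and the observation, verifiable directly from the matrix realisations of $\tilde{\p}$ and \eqref{P}, that $\p_{+}\subset\tilde{\p}\cap \g$. Together these give exactly what is needed.

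For the second hypothesis, I would use the $|1|$-graded structure $\g=\g_{-1}\oplus\g_0\oplus\g_1$ of projective geometry, in which $\g_1=\p_{+}$ is abelian. Since the dual basis elements $Z_1,\dots,Z_n\in(\g/\p)^*\cong\p_{+}$ lie in $\g_1$, and since in dimension two $\kappa$ also takes values in $\g_1$, the sum
\begin{align*}
\partial^*_1(\kappa)(u)(X)=2\sum_{i=1}^{n}[\kappa(u)(X_i,X),Z_i]
\end{align*}
lies in $[\g_1,\g_1]=0$ and so vanishes identically. The normality of $(\mathcal{G},\omega)$, i.e.\ $\partial^*(\kappa)=\partial^*_1(\kappa)+\partial^*_2(\kappa)=0$, then forces $\partial^*_2(\kappa)=0$ as well. (As a sanity check, this is also consistent with the remark following Proposition \ref{prop-norm}: projective connections are torsion-free and in dimension two the projective Weyl tensor vanishes identically, so $\kappa_H$ has a unique component, namely the Cotton piece in $\g_1$, and both $\partial^*_i(\kappa_H)$ vanish for grading reasons.)

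With both hypotheses verified, Proposition \ref{prop-norm} yields $\tilde{\partial}^*(\tilde{\kappa})=0$, i.e.\ the Fefferman-induced Cartan connection $\tilde{\omega}$ is already normal. The equality $\nabla^{\tilde{\mathcal{V}},nor}=\nabla^{\tilde{\mathcal{V}}}$ for every tractor bundle then follows from the uniqueness of the normal Cartan connection of a conformal structure: since $\tilde{\omega}$ is normal it coincides with the distinguished normal Cartan connection used to define $\nabla^{\tilde{\mathcal{V}},nor}$, and the induced tractor connections agree. There is no serious obstacle in this argument; all the real work has been packaged into Proposition \ref{prop-norm}, and the dimension-two hypothesis is precisely what makes the two inputs to that proposition available simultaneously.
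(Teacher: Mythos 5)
Your proposal is correct and follows essentially the same route as the paper: the paper's (very brief) argument is exactly to note that in dimension two the normal projective curvature lands in $\Lambda^2(\g/\p)^*\otimes\p_{+}$ with $\p_{+}\subset\tilde{\p}\cap\g$, and then to invoke Proposition \ref{prop-norm}. The only difference is that you explicitly verify the hypothesis $\partial^*_1(\kappa)=\partial^*_2(\kappa)=0$ via the abelianness of $\p_{+}$ in the $|1|$-grading of $\sl(3)$, a detail the paper leaves implicit (it is also covered by the remark following Proposition \ref{prop-norm}); your version is a valid and slightly more self-contained way to close that step.
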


 This has some immediate consequences (compare with the results in \cite{nurowski-sparling-cr}, \cite{dunajski-tod}, \cite{cap-gover-cr}): 

\begin{prop}\label{prop-22}
The split-signature conformal structures  obtained from two-dimensional projective structures  via the Fefferman-type construction  \ref{(n,n)const} have the following  properties:
\begin{enumerate}
 \item The conformal holonomy $\Hol(\nabla^{\tilde{\mathcal{T}},nor})$ is contained in $\SL(3).$
\item The normal conformal tractor connection $\nabla^{\tilde{\mathcal{T}},nor}$ preserves the decomposition $\tilde{\mathcal{T}}=\mathcal{\tilde{E}}\oplus\mathcal{\tilde{F}}.$
\item The adjoint tractor $\mathbf{K}$ is parallel with respect to the normal tractor connection, i.e. $\nabla^{\Lambda^2\tilde{\mathcal{T}},nor}\mathbf{K}=0$. Thus $\mathbf{K}$ corresponds to a normal conformal Killing field $k\in\mathfrak{X}(\tilde{M}),$ i.e. an infinitesimal conformal isometry that inserts trivially into Weyl-curvature and Cotton-tensor (cf. \cite{cap-gover-cr-tractors}).
\item The spin tractor bundle has two sections $\mathbf{s}_{E}$ and $\mathbf{s}_{F}$ with non-trivial pairing that are parallel with respect to the normal tractor connection, i.e. $\nabla^{\tilde{\mathcal{S}}_{+},nor}\mathbf{s}_{E}=0$ and $\nabla^{\tilde{\mathcal{S}}_{-},nor}\mathbf{s}_{F}=0$. Thus they correspond to two pure twistor spinors $\chi_e\in\Ga(S_{+}[\frac{1}{2}])$ and $\chi_f\in\Ga(S_-[\frac{1}{2}])$.

 \end{enumerate}
\end{prop}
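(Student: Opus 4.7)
The key observation is that Proposition \ref{prop-dim2} has already done essentially all the work: in dimension two the normal conformal Cartan connection $\tilde{\omega}$ obtained from the Fefferman construction coincides with the (a priori non-normal) one, so $\nabla^{\tilde{\mathcal{V}},nor} = \nabla^{\tilde{\mathcal{V}}}$ on every associated tractor bundle. All four claims of Proposition \ref{prop-22} have counterparts, for the induced (unnormalised) tractor connection, already recorded at the end of Section \ref{sec-induced}, so the plan is to lift each of these through the identification $\nabla^{\tilde{\mathcal{V}},nor} = \nabla^{\tilde{\mathcal{V}}}$ and then invoke standard BGG-theoretic dictionaries to obtain the geometric interpretations in (3) and (4).

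Concretely: for (1), because the conformal Cartan connection is obtained by $\tilde{P}$-equivariant extension of a Cartan connection $\omega$ taking values in $\g = \sl(3) \subset \tilde{\g} = \so(3,3)$, the principal bundle connection $\hat{\tilde{\omega}}$ on $\tilde{\mathcal{G}} \times_{\tilde{P}} \tilde{G}$ reduces to the subbundle $\mathcal{G} \times_{Q} G$ with $\g$-valued connection form. Its holonomy therefore lies in (a conjugate of) $\SL(3) \subset \Spin(3,3)$, and Proposition \ref{prop-dim2} identifies this with $\Hol(\nabla^{\tilde{\mathcal{T}},nor})$. For (2), the decomposition $\mathbb{R}^{3,3} = E \oplus F$ is $\SL(3)$-invariant, so the associated decomposition $\tilde{\mathcal{T}} = \tilde{\mathcal{E}} \oplus \tilde{\mathcal{F}}$ is preserved by $\nabla^{\tilde{\mathcal{T}}}$, hence by $\nabla^{\tilde{\mathcal{T}},nor}$.

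For (3) and (4), the invariant elements $\mathbb{K} \in \Lambda^2 \mathbb{R}^{3,3}$, $s_E \in \Delta_\pm^{3,3}$, and $s_F \in \Delta_-^{3,3}$ are $G$-fixed, hence define $\nabla^{\tilde{\mathcal{V}}}$-parallel sections $\mathbf{K}$, $\mathbf{s}_E$, $\mathbf{s}_F$ of the respective associated tractor bundles; by Proposition \ref{prop-dim2} these are also parallel for the normal tractor connection. To promote $\mathbf{K}$ to a normal conformal Killing field I would appeal to the standard result (cf.\ \cite{cap-gover-cr-tractors}) that, for any normal parabolic geometry, the projection $\Pi_0 : \Lambda^2 \tilde{\mathcal{T}} \to T\tilde{M}$ maps $\nabla^{\tilde{\mathcal{T}},nor}$-parallel adjoint tractors bijectively to normal infinitesimal automorphisms; the image $k = \Pi_0(\mathbf{K}) \in \mathfrak{X}(\tilde{M})$ is the desired normal conformal Killing field. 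Analogously, for (4) I would use the BGG splitting operator $L_0^{\tilde{\mathcal{S}}_\pm}$ described in \eqref{twisplit}: $\nabla^{\tilde{\mathcal{S}},nor}$-parallel spin tractors correspond bijectively (via $\Pi_0$) to twistor spinors, so $\chi_e := \Pi_0(\mathbf{s}_E)$ and $\chi_f := \Pi_0(\mathbf{s}_F)$ are twistor spinors in $\Gamma(S_+[\tfrac{1}{2}])$ and $\Gamma(S_-[\tfrac{1}{2}])$ respectively; purity is inherited from the purity of $s_E$, $s_F$, and the non-trivial pairing is inherited from the non-trivial pairing of the invariant spinors since both parallel transports preserve the $\Spin(3,3)$-invariant pairing.

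The only mildly delicate step is the last one: one needs that the projection $\Pi_0$ does not destroy either purity or the non-triviality of the pairing. Purity passes through because $\Pi_0$ is simply restriction to the $\tilde{P}$-irreducible quotient and purity is a pointwise algebraic condition preserved under the $\tilde{P}$-equivariant identification of the tractor spinors with their leading slots. Non-triviality of the pairing is immediate from the fact that pairing $\mathbf{s}_E$ and $\mathbf{s}_F$ against each other yields a nonzero parallel density, whose leading slot is in turn a nonzero function; this exactly records that $\chi_e$ and $\chi_f$ pair non-trivially. Beyond this point everything is a direct translation, and no genuine obstacle should arise.
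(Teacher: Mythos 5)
Your proposal is correct and follows exactly the route the paper intends: the paper states Proposition \ref{prop-22} as an immediate consequence of Proposition \ref{prop-dim2} together with the parallelism of $\mathbf{K}$, $\mathbf{s}_E$, $\mathbf{s}_F$ and the preservation of $\tilde{\mathcal{T}}=\tilde{\mathcal{E}}\oplus\tilde{\mathcal{F}}$ for the induced tractor connections established in Section \ref{sec-induced}, combined with the standard BGG dictionary between parallel tractors and normal solutions. Your filling-in of the details (holonomy reduction via the reduced principal bundle, purity and non-degenerate pairing surviving the projection $\Pi_0$) is accurate and adds nothing that conflicts with the paper's argument.
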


\subsubsection{Almost Einstein structures}
A nice application of  the construction in dimension two is that it makes visible the properties of a projective structure that correspond to the existence of
 almost Einstein scales of the associated conformal structure.

\begin{prop}\label{prop-22ein}
 Suppose $(\tilde{M},[g])$ is a  conformal structure of signature $(2,2)$  associated to a $2$-dimensional projective structure $(M,[D])$ via the Fefferman-type construction. 
\begin{enumerate}[(1)]
\item Then $\mb{aEs}(\mc{C})=\mb{aRs}([D])\oplus\ker\Th_0^{T}$.
\item 
Let $g\in\mb{aEs}(\mc{C})$\ be defined on the open-dense subset
$U\subset M$\ and let $D^g$ be the Levi-Civita connection of $g$.
Then $g$ corresponds to an almost Ricci-flat structure of
$[D]$\ if and only if $D^g\chi_{f}=0$
and $g$\ corresponds to an element of $\ker\Th_0^{\mc{T}}$\ if
and only if $D^g\chi_{e}=0$.
In both cases it automatically follows that $\Ric(g)=0$.
\end{enumerate}
\end{prop}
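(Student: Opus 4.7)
The plan is to reduce both statements to properties of parallel tractors on the conformal Fefferman space via the preserved decomposition $\ti{\mc{T}}=\mc{\ti E}\oplus\mc{\ti F}$.

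For part (1), I would first use that for $n=2$ Proposition~\ref{prop-dim2} yields $\na^{\ti{\mc{T}},\mathrm{nor}}=\na^{\ti{\mc{T}}}$, so by Section~\ref{sec-induced} this normal tractor connection preserves the splitting $\mc{\ti E}\oplus\mc{\ti F}$. Every parallel standard tractor therefore splits as $\mb{s}=\mb{s}^E+\mb{s}^F$ with each summand parallel. The correspondence of Section~\ref{Relpartra} identifies parallel sections of $\mc{\ti E}$ (resp.\ $\mc{\ti F}$) on $\tilde M$ with parallel sections of the projective standard tractor bundle $\mc{T}$ (resp.\ its dual $\mc{T}^*$) on $M$. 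Via the BGG discussion of Section~\ref{sec-bgg} together with~\eqref{aRsref} these are parametrised by $\ker\Th_0^{\mc{T}}$ and $\mb{aRs}([D])$, respectively, while parallel sections of $\ti{\mc{T}}$ themselves are parametrised by $\mb{aEs}(\mc{C})$. The direct sum decomposition in (1) follows.

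For part (2), the plan is to exploit the pure-spinor property of $s_E,s_F$: the kernel of Clifford multiplication on $s_F$ is exactly $F$, and the restriction $E\to\De\cdot s_F$ is injective. Hence for the parallel standard tractor $\mb{s}=\mb{s}^E+\mb{s}^F$ associated with $g\in\mb{aEs}(\mc{C})$ we have $\mb{s}\cdot\mb{s}_F=\mb{s}^E\cdot\mb{s}_F$, which vanishes iff $\mb{s}^E=0$. By part~(1) this is equivalent to $g\in\mb{aRs}([D])$, and analogously $\mb{s}\cdot\mb{s}_E=0\Leftrightarrow\mb{s}^F=0\Leftrightarrow g\in\ker\Th_0^{\mc{T}}$. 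To translate the tractor identity $\mb{s}\cdot\mb{s}_F=0$ into the stated differential condition, I would work in the gauge of the Einstein metric $g$ itself, where~\eqref{splitStd} gives $\si\equiv 1$, $\ph=D\si=0$, $\rh=-J/(2n)$, and where~\eqref{twisplit} gives $\mb{s}_F|_g=(\tau,\chi_f)^t$ with $\tau=\tfrac{1}{\sqrt{2}n}\crd\chi_f$. The tractor Clifford formula~\eqref{traCli} then reduces to
\[
\mb{s}\cdot\mb{s}_F|_g=\begin{pmatrix}\sqrt{2}\,\rh\,\chi_f\\ -\sqrt{2}\,\tau\end{pmatrix}\!,
\]
so $\mb{s}\cdot\mb{s}_F=0$ is equivalent to the pair of conditions $\crd\chi_f=0$ and $\rh\,\chi_f=0$.

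The twistor equation $D_c\chi_f=-\tfrac{1}{2n}\ga_c\crd\chi_f$ immediately upgrades $\crd\chi_f=0$ to $D^g\chi_f=0$, and conversely $D^g\chi_f=0$ implies $\crd\chi_f=\ga^cD_c\chi_f=0$. The delicate point I anticipate is closing the equivalence in the ``$D^g\chi_f=0\implies g\in\mb{aRs}([D])$'' direction, since one must additionally deduce $\rh=0$. This I would handle via the classical integrability principle for parallel spinors on Einstein manifolds: from $D^g\chi_f=0$ one obtains $R^g(X,Y)\cdot\chi_f=0$, and contracting with Clifford multiplication while using the Einstein condition forces $\Sc(g)=0$, hence $\rh=-J/(2n)=0$. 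Since a non-trivial parallel spinor is nowhere zero, the identity $\rh\,\chi_f=0$ in the other direction likewise forces $\rh=0$, producing $\Ric(g)=0$ in both cases. The case of $\chi_e$ and $\ker\Th_0^{\mc{T}}$ proceeds entirely analogously, using $\mb{s}\cdot\mb{s}_E=0\Leftrightarrow\mb{s}^F=0$.
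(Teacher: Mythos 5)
Your proposal is correct and follows essentially the same route as the paper: part (1) via the preserved splitting $\ti{\mc{T}}=\mc{\ti E}\oplus\mc{\ti F}$ and the parallel-tractor correspondences, and part (2) by reducing to $L_0^{\ti{\mc{T}}}(\si)\cdot\mb{s}_F=0$ and computing in the Einstein scale, which yields exactly the pair of conditions $\crd\chi_f=0$ and $J\chi_f=0$ as in the paper. The only (harmless) divergence is in extracting $J=0$: the paper reads it off from the top slot of $\na^{\ti{\mc{S}}}\mb{s}_F=0$, namely $\PP_{cp}\ga^p\chi_f=0$, whereas you invoke the equivalent curvature integrability condition $R^g(X,Y)\cdot\chi_f=0$ for the parallel spinor.
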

\begin{proof}
\begin{enumerate}[(1)]
\item
We apply the relations between projective and conformal
parallel tractors discussed above in section \ref{Relpartra} to the conformal standard tractor bundle $\tilde{\mathcal{T}}=\tilde{\mathcal{G}}\times_{\tilde{P}}\mathbb{R}^{3,3}$. 
 As a $G=SL(3)$ representation $\mathbb{R}^{3,3}$ decomposes as $\mathbb{R}^{3,3}=\mathbb{R}^3\oplus{\mathbb{R}^3}^*,$ and thus  
the conformal standard tractor bundle $\tilde{\mathcal{T}}$ decomposes.  
 For a  $\nabla^{\tilde{\mathcal{T}}}=\nabla^{\tilde{\mathcal{E}}}+\nabla^{\tilde{\mathcal{F}}}$ parallel section $\tau=\tau_E+\tau_F$ of $\tilde{\mathcal{T}}=\tilde{\mathcal{E}}\oplus\tilde{\mathcal{F}}$, one summand corresponds to  a parallel section of the projective standard tractor bundle $\mathcal{E}=\mathcal{T}=\mathcal{G}\times_{P}\mathbb{R}^3$, and the other summand corresponds to a parallel section of the dual bundle $\mathcal{F}={\mathcal{T}}^*=\mathcal{G}\times_{P}{\mathbb{R}^3}^*$ (both equipped with the normal tractor connections $\nabla^{\mathcal{T}}$ and $\nabla^{{\mathcal{T}}^*}$). 
 
 Now by Proposition \ref{prop-dim2} we have $\nabla^{\tilde{\mathcal{T}}}=\nabla^{\tilde{\mathcal{T}},nor}$. It is well known that parallel conformal standard tractors for the normal tractor connection correspond to almost Einstein structures $\mb{aEs}(\mc{C})$, see \eqref{def-aEs}.  $\nabla^{{\mathcal{T}}^*}$-parallel projective co-tractors correspond to almost Ricci-flat structures $\mb{aRs}([D])$, see \eqref{aRsref},
and $\nabla^{\mathcal{T}}$-parallel projective standard tractors correspond to solutions of the projectively invariant differential operator $\Th_0^{\mathcal{T}},$ see \eqref{proj-standardBGG}.

\item
A parallel conformal standard tractor $s\in\Ga(\tilde{\mc{T}})$\
  corresponds to an almost Ricci-flat scale of $(M,[D])$ iff lies in $\Ga(\tilde{\mc{F}})$. On the other hand, parallel standard tractors  $\Ga(\ti{\mc{T}})$
  correspond to almost Einstein scales, so we have to characterise those
  $\si\in \mb{aEs}(\mc{C})$\ with $L_0^{\ti{\mc{T}}}\si\in\Ga(\ti{\mc{F}})$.
  Since $\ti{\mc{F}}$\ was defined as the kernel of $\mb{s}_B\in\Ga(\ti{\mc{S}})$\
  under Clifford multiplication, we equivalently have to check when
  \begin{align}
    \label{l0actchi}
    L_0^{\ti{\mc{T}}}(\si) \cdot \mb{s}_F=0.
  \end{align}
  Now $U=M\backslash\si^{-1}(\{0\})$ is open-dense in $M$, hence
  suffices by continuity to verify \eqref{l0actchi}\
  on that subset. On $U$\ we can use the Einstein metric $g$\ corresponding to the scale
  $\si$. Then according to \eqref{splitStd}
  \begin{align*}
    s=L_0^{\ti{\mc{T}}}(\si)=
    \begin{pmatrix}
    -\frac{1}{4}J \\
    0 \\
    1
    \end{pmatrix},
  \end{align*}
  where $J=g^{pq}\PP_{pq}$\ is the trace of the Schouten tensor.
  Using
  \begin{align*}
    \mb{s}_F=L_0^{\ti{\mc{S}}}(\chi_f)=
    \begin{pmatrix}
      \frac{1}{2\sqrt{2}}\crd\chi_f \\ 
      \chi_f
    \end{pmatrix}
  \end{align*}
  and the formula \eqref{traCli} for the tractor-Clifford action, equation \eqref{l0actchi} becomes
  \begin{align*}
    \begin{pmatrix}
      -\frac{1}{2\sqrt{2}} J \chi_f \\
      -\frac{1}{2}\crd\chi_f
    \end{pmatrix}
    =
  \begin{pmatrix}
    0 \\
    0 
  \end{pmatrix}.
  \end{align*}
  Since $\chi_f$\ satisfies
  the twistor equation $D_c\chi_f+\frac{1}{4}\crd\chi_f=0$,
  vanishing of $\crd\chi_f$\ implies that $\chi_f$\ is parallel
  with respect to $D$. In that case, since $\mb{s}_F$\ is
  $\na^{\ti{\mc{S}}}$-parallel,
  \begin{align*}
  \begin{pmatrix}
    0 \\
    0 
  \end{pmatrix}=
  \na^{\ti{\mc{S}}}_c \mb{s}_F=\na^{\ti{\mc{S}}}_c L_0^{\ti{\mc{S}}}\chi_f=
  \begin{pmatrix}
    \frac{1}{\sqrt{2}}\PP_{cp}\ga^p \\
    0
  \end{pmatrix},
  \end{align*}
  and thus, since $g$\ is Einstein, $J=0$\ and $\Ric(g)=0$.
  This shows that \eqref{l0actchi}\ holds
  for $\si\in\mb{aEs}(\mc{C})$\ if and only
  if $D\chi_f=0$ on $U$, and then $\Ric(g)=0$\ follows automatically.

  The discussion for the case where $g$\ corresponds to
  an element in $\ker\Th_0^{\mc{T}}$ is completely analogous.
\end{enumerate}
\end{proof}

\begin{rema}
  If $\mb{aRs}([D])\not =\{0\}$\ then $[D]$ is locally projectively flat,
and therefore also $\mc{C}$ is locally conformally flat:
For a projective $2$-dimensional structure the Weyl curvature as defined
in \eqref{formulaCprojective} always vanishes and
  the Cotton-tensor $A(D)$ as defined in \eqref{formulaAprojective} is projectively invariant and the
  complete obstruction against projective flatness. If $\hat D\in\mb{aRs}([D])$
  is a Ricci-flat affine connection on a open-dense subset $U\subset M$
  then $\Ric(\hat D)=0$\ implies that the Cotton-tensor $A(\hat D)$\ of $\hat D$ vanishes
  on $U$. If $D\in [D]$, then by projective invariance $A(D)=A(\hat D)=0$\ on
  $U$, and by continuity thus $A(D)=0$\ on all of $M$.
\end{rema}

\subsubsection{Conformal Killing fields}
Note that under $\mathfrak{sl}(3)$ the Lie algebra  $\mathfrak{so}(3,3)$ decomposes into the following irreducible pieces
\begin{align}
\mathbb{R}^3\oplus{\mathbb{R}^3}^*\oplus\mathfrak{sl}(3)\oplus\mathbb{R}.
\end{align}
Analogously to \cite{cap-gover-cr-tractors,mrh-sag-rank2} one can prove that:

\begin{prop}\label{prop-22kill}
The space of conformal Killing fields decomposes as
\begin{align}
 \mathbf{aEs}(\mc{C})\oplus\mathbf{inf}([D])\oplus\mathbb{R}k ,
\end{align} 
  where $k$ is the conformal Killing field from Proposition \ref{prop-22}, $ \mathbf{aEs}(\mc{C})$ a subspace isomorphic to the space of almost Einstein structures, i.e. solutions of \eqref{aEs-equ}, and $\mathbf{inf}([D])$ a subspace  isomorphic to the space of infinitesimal automorphisms of the original projective structure.
\end{prop}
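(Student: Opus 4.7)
The plan is to use normality of the Fefferman-type construction in dimension $n=2$ (Proposition \ref{prop-dim2}) in order to transfer the $\sl(3)$-decomposition of $\so(3,3)$ to a parallel decomposition of the conformal adjoint tractor bundle $\mc{A}\ti M \cong \Lambda^2 \ti{\mc T}$, and then to apply the projective-to-conformal correspondence of Section \ref{Relpartra} one summand at a time.

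Since $\Hol(\na^{\ti{\mc T},nor}) \subset \SL(3)$ by Proposition \ref{prop-22} and $\na^{\mc{A}\ti M,nor} = \na^{\mc{A}\ti M}$ by Proposition \ref{prop-dim2}, the $\sl(3)$-module decomposition $\so(3,3) = \rr\cdot\mb{K} \oplus \sl(3) \oplus \rr^3 \oplus (\rr^3)^*$ lifts to a $\na^{\mc{A}\ti M,nor}$-parallel splitting
$$
\mc{A}\ti M \;=\; \rr\cdot\mb{K} \,\oplus\, \mc{G}\times_P \sl(3) \,\oplus\, \ti{\mc E} \,\oplus\, \ti{\mc F}.
$$
Normal conformal Killing fields on $(\ti M,\mc C)$ are in bijection with $\na^{\mc{A}\ti M,nor}$-parallel sections, and Section \ref{Relpartra} identifies the parallel sections of each summand with parallel sections of the associated projective tractor bundle on $M$. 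Concretely: the trivial line $\rr\cdot\mb{K}$ contributes the single parallel adjoint tractor $\mb{K}$ and thus the conformal Killing field $k$ of Proposition \ref{prop-22}; the $\sl(3)$-summand is the projective adjoint tractor bundle, whose parallel sections are the infinitesimal automorphisms $\mb{inf}([D])$ of the normal projective Cartan geometry; and the pair $\ti{\mc E}\oplus\ti{\mc F} = \ti{\mc T}$ is the conformal standard tractor bundle, whose parallel sections are by definition $\mb{aEs}(\mc C)$, and which Proposition \ref{prop-22ein} refines to $\mb{aRs}([D])\oplus\ker\Th_0^{\mc T}$. Summing the four pieces produces the claimed decomposition.

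The main obstacle is to upgrade the statement from \emph{normal} conformal Killing fields to \emph{all} conformal Killing fields, i.e.\ to show that every solution of the conformal Killing equation on $\ti M$ already arises from a $\na^{\mc{A}\ti M,nor}$-parallel adjoint tractor. This is handled exactly as in \cite{cap-gover-cr-tractors,mrh-sag-rank2}: one expresses the conformal prolongation connection on $\mc{A}\ti M$ as $\na^{\mc{A}\ti M,nor}$ plus a zero-order correction built from the harmonic curvature, and uses that in dimension two the projective harmonic curvature reduces to the Cotton tensor \eqref{formulaAprojective}, which is sufficiently constrained to force this correction to vanish on the BGG-splitting of any conformal Killing field.
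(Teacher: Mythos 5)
Your overall strategy --- decomposing the adjoint tractor bundle $\La^2\ti{\mc{T}}$ according to the $\mathfrak{sl}(3)$-module decomposition of $\mathfrak{so}(3,3)$, which is parallel for the normal tractor connection by Propositions \ref{prop-dim2} and \ref{prop-22}, and then identifying the contribution of each summand via Section \ref{Relpartra} --- is indeed the route the paper intends when it points to \cite{cap-gover-cr-tractors,mrh-sag-rank2} (the paper gives no proof of its own). The summands $\rr\,\mb{K}$ and $\La^2\ti{\mc{E}}\oplus\La^2\ti{\mc{F}}\cong\ti{\mc{F}}\oplus\ti{\mc{E}}=\ti{\mc{T}}$ are handled correctly in spirit.

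The gap is in your final step. You claim that the curvature correction vanishes on the BGG-splitting of \emph{every} conformal Killing field, i.e.\ that every conformal Killing field of the Fefferman space is normal. This is false in general, and it would in fact undermine the statement you are proving: if all conformal Killing fields were $\na^{\La^2\ti{\mc{T}},nor}$-parallel, then by the correspondence of Section \ref{Relpartra} the $\mathfrak{sl}(3)$-summand would contribute only the $\na$-\emph{parallel} projective adjoint tractors, i.e.\ only those symmetries of $[D]$ that insert trivially into the Cotton tensor --- a proper subspace of $\mathbf{inf}([D])$ in general. The same slip occurs when you say that parallel sections of the projective adjoint tractor bundle ``are the infinitesimal automorphisms'': infinitesimal automorphisms of a parabolic geometry correspond to sections $s$ of the adjoint tractor bundle satisfying $\na^{nor}s=-i_{\Pi_0(s)}K$, not to parallel sections. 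The correct mechanism, as in \cite{cap-gover-cr-tractors}, is to use exactly this characterisation upstairs: a conformal Killing field $\xi$ corresponds to $\ti{s}$ with $\na^{nor}\ti{s}=-i_{\xi}\ti{K}$, and since $\ti{K}=\ti{K}^{nor}$ is the equivariant extension of the projective curvature it takes values in the $\mathfrak{sl}(3)$-summand of $\La^2\ti{\mc{T}}$. Hence the equation decouples: the components of $\ti{s}$ in $\rr\,\mb{K}$ and in $\La^2\ti{\mc{E}}\oplus\La^2\ti{\mc{F}}$ are genuinely parallel (giving $\rr k$ and $\mathbf{aEs}(\mc{C})$), while the $\mathfrak{sl}(3)$-component satisfies the (generally non-trivial) automorphism equation, which under Section \ref{Relpartra} is the infinitesimal automorphism equation for $(M,[D])$ and yields all of $\mathbf{inf}([D])$. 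Your appeal to the special form of the two-dimensional harmonic curvature does not rescue the argument: a non-flat two-dimensional projective structure with symmetries has a non-vanishing Cotton tensor, and the induced conformal Killing fields insert non-trivially into the Weyl curvature of the induced conformal structure.
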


\subsection{Remark: The construction for Lagrange contact structures}
Note that we can add an intermediate step to the construction of  section
\ref{construction}.
Let $P'$ be the parabolic in $G$ that stabilises the ray
$\mathbb{R}_{+}v_{+}$ and the $n$-dimensional subspace $\bar{E}$, i.e.
matrices of the form
\begin{align}\label{P'}
\begin{pmatrix}
                     a&Z^t&b\\
                     0 & A &Y\\
                     0&0& c  \end{pmatrix}.
\end{align}
Then obviously $Q\subset P'\subset P$. The correspondence space
$M'=\calG\times_{P} P/P'$ can be identified
with the projectivised cotangent bundle $\mathcal{P}(T^*M)$.
The parabolic geometry $(\calG\to M',\omega)$ of type $(G,P')$ defines a Lagrange contact structure on $\mathcal{P}(T^*M)$, i.e. a contact distribution $\mathcal{H}\subset TM'$ and a decomposition $\mathcal{H}=e' \oplus f'$ into two rank $n$ subbundles such that 
the restriction of the Levi bracket to $e'\times e'$ and $f'\times f'$ vanishes identically  (see e.g.
\cite{cap-slovak-par}).
Hence the construction of section \ref{construction}  can  be regarded as the composition of a correspondence space construction from projective to Lagrange
contact structures  with a Fefferman-type construction from Lagrange contact to conformal structures, which is similar to the original Fefferman construction;
one deals with different real forms of the same complex Lie groups in the two cases.

\begin{prop}
The Fefferman-type construction for Lagrange contact structures produces a normal conformal parabolic geometry iff
the parabolic geometry encoding the Lagrange contact structure is torsion-free.
\end{prop}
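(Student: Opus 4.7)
The plan is to derive this equivalence from Proposition~\ref{prop-norm}, now applied to the pair $(G,P')$ with $P'$ the intermediate parabolic in \eqref{P'} stabilising the flag $\mathbb{R}v_+\subset\bar E$; the intersection $\q=\g\cap\tilde\p$ is unchanged from before, but now sits as a codimension one subalgebra of $\p'$ rather than of $\p$.

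For the direction ``torsion-free $\Rightarrow$ normal'', I would first write out the Kostant decomposition of $H^2(\g_-,\g)$ for the contact grading of $\sl(n+1)$ corresponding to $\p'$. This identifies the $G_0$-irreducible components of the harmonic curvature $\kappa_H$ together with their homogeneities; the torsion pieces are exactly those taking values in $\g_-\subset\g$. Under the torsion-freeness hypothesis these vanish and $\kappa_H$ takes values in $\p'$. Two checks then remain. First, that each surviving harmonic piece actually takes values in $\q\subset\p'$, which is read off by comparing the weights of the irreducibles against the one-dimensional complement of $\q\cap\g_0$ in $\g_0$. Second, that $\partial^*_1(\kappa_H)$ and $\partial^*_2(\kappa_H)$ land in disjoint grading components of $(\g/\p')^*\otimes\g$, so by the argument of the remark following Proposition~\ref{prop-norm} they vanish separately on $\kappa_H$. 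Corollary~3.2 of \cite{cap-correspondence} then propagates both properties from the harmonic part to the full curvature $\kappa$, and Proposition~\ref{prop-norm} yields normality of $\tilde\omega$.

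For the converse direction I would argue by contraposition: if the Lagrange contact structure has nonzero harmonic torsion, so that $\kappa_H$ has a nonzero component with values in $\g_-$, then the hypothesis $\kappa(u)(X_i,X)\in\tilde\p$ used in the proof of Proposition~\ref{prop-norm} fails since $\g_-\cap\tilde\p=0$. Substituting the harmonic torsion into the expression \eqref{norm} for $\tilde\partial^*\tilde\kappa$, the plan is to exhibit a nonzero contribution that lies in a grading component where no cancellation with other pieces of $\kappa$ is possible; this again leans on the grading-separation principle of the remark. Hence $\tilde\omega$ cannot be normal, which closes the equivalence.

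The main obstacle in both directions is the cohomological bookkeeping: spelling out the explicit Kostant decomposition of $H^2(\g_-,\g)$ for the Lagrange contact grading, locating each irreducible component by weight and homogeneity, and checking how each interacts with $\partial^*_1$, $\partial^*_2$, and the embedding $\q\hookrightarrow\p'$. Once that is in place, Proposition~\ref{prop-norm} and the non-cancellation remark following it do the remaining work mechanically.
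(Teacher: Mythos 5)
Your forward direction is essentially the paper's argument: torsion-freeness forces the harmonic curvature into $\p'$ (in fact into ${\g_0'}^{ss}\oplus\p'_+$, which the paper cites from \cite{cap-zadnik-chains} rather than re-deriving via Kostant's theorem), the two pieces $\partial^*_1,\partial^*_2$ vanish separately by the non-cancellation remark, Corollary~3.2 of \cite{cap-correspondence} propagates this to $\kappa$, and Proposition~\ref{prop-norm} applies because ${\g_0'}^{ss}\oplus\p'_+$ is a $P'$-submodule contained in $\g\cap\tilde\p$. The only caveat is that Corollary~3.2 propagates the vanishing of $\partial^*_1,\partial^*_2$ from $\kappa_H$ to $\kappa$; the containment of the values of $\kappa$ in $\g\cap\tilde\p$ needs the separate (standard) observation that the full curvature lies in the $P'$-submodule generated by the harmonic curvature. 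Your plan to verify containment in $\q$ component-by-component by weights would work but is heavier than needed.

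The genuine gap is in the converse. Arguing by contraposition that ``the hypothesis of Proposition~\ref{prop-norm} fails'' proves nothing, since that proposition is only a sufficient criterion; you acknowledge this, but your plan to ``exhibit a nonzero contribution to $\tilde\partial^*\tilde\kappa$ in a grading component where no cancellation is possible'' is left entirely unexecuted, and the non-cancellation remark you invoke concerns the two summands $\partial^*_1,\partial^*_2$ of the \emph{original} codifferential, not the components of $\tilde\partial^*\tilde\kappa$, so it does not apply as stated. You are missing the one-line argument the paper uses: every normal conformal parabolic geometry is automatically torsion-free, so if $\tilde\omega$ is normal then $\tilde\kappa$ takes values in $\tilde\p$; restricting to $\mathcal{G}$ and using $\g\cap\tilde\p\subset\p'$ shows that $\kappa$ takes values in $\p'$, i.e.\ the Lagrange contact geometry is torsion-free. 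This makes the converse immediate and avoids any curvature computation.
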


\begin{proof}
If the geometry is torsion-free, then there is only one non-trivial harmonic curvature component (cf. \cite{cap-slovak-par})
and $\partial^*_1$ and $\partial^*_2$ vanish separately on $\kappa_{H}$, and thus on $\kappa$.  
The harmonic curvature component $\kappa_{H}$ takes values in $\Lambda^2(\g/\p')^*\otimes({\g_{0}'}^{ss}\oplus\p_{+}')$ (see e.g. \cite{cap-zadnik-chains}). 
This is a $P'$ submodule, and so the the entire curvature takes values in that subspace. Since ${\g'_{0}}^{ss}\oplus\p'_{+}\subset\g\cap\tilde{\p}$ we can  apply Proposition \ref{prop-norm} to conclude normality.
 The converse direction is obvious since every normal conformal geometry is torsion-free and
$\g\cap\tilde{\p}\subset\p'.$ 
\end{proof}

Thus, as in the two dimensional case discussed before, we have:
\begin{cor}\label{2dimstructure}
For the split-signature conformal structures  coming from torsion-free Lagrange contact structures
\begin{enumerate}
 \item the conformal holonomy  is contained in $\SL(n+1),$
\item the normal conformal tractor connection $\nabla^{\tilde{\mathcal{T}},nor}$ preserves the decomposition $\tilde{\mathcal{T}}=\mathcal{\tilde{E}}\oplus\mathcal{\tilde{F}},$
\item the adjoint tractor $\mathbf{K}$ is parallel with respect to the normal conformal tractor connection and thus  it corresponds to a normal conformal Killing field,
\item the spin tractor bundle has two parallel sections $\mb{s}_E\in\Gamma(\ti{\mc{S}}_+)$\ and $\mb{s}_F\in\Gamma(\tilde{\mc{S}}_-)$\ with non-trivial pairing, and these correspond to two pure twistor spinors $\chi_e\in \Ga(S_{\pm}[\frac{1}{2}]),\chi_f\in \ga(S_-[\frac{1}{2}])$.

 \end{enumerate}
\end{cor}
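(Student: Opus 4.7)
The plan is to leverage the immediately preceding proposition, which guarantees that in the torsion-free case the Fefferman-induced conformal Cartan connection $\tilde{\omega}$ is normal. Consequently, on every associated tractor bundle $\tilde{\mathcal{V}}$ the Fefferman-induced connection $\nabla^{\tilde{\mathcal{V}}}$ coincides with the normal conformal tractor connection $\nabla^{\tilde{\mathcal{V}},nor}$. This equality is the key input: every structural object that is automatically parallel for $\nabla^{\tilde{\mathcal{V}}}$ (because it arises from a $G$-invariant element of the modelling representation) is then also parallel for the normal tractor connection.

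Having established this, items (2), (3), and (4) all reduce to observations already recorded in Section \ref{sec-induced}. The decomposition $\mathbb{R}^{n+1,n+1}=E\oplus F$ is $G=\SL(n+1)$-invariant, so it induces the bundle decomposition $\tilde{\mathcal{T}}=\tilde{\mathcal{E}}\oplus\tilde{\mathcal{F}}$, and since $\tilde{\omega}$ takes values in $i(\g)\subset \tilde{\g}$ on $\mathcal{G}\subset \tilde{\mathcal{G}}$ and extends $\tilde{P}$-equivariantly, the induced tractor connection preserves this decomposition; by normality, so does $\nabla^{\tilde{\mathcal{T}},nor}$, giving (2). The same argument applied to the $G$-invariant involution $\mathbb{K}\in\Lambda^2\mathbb{R}^{n+1,n+1}$ and to the $G$-invariant pure spinors $s_E,s_F$ yields $\nabla^{\Lambda^2\tilde{\mathcal{T}},nor}\mb{K}=0$, $\nabla^{\tilde{\mathcal{S}}_{\pm},nor}\mb{s}_E=0$, and $\nabla^{\tilde{\mathcal{S}}_-,nor}\mb{s}_F=0$, yielding (3) and (4).

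To finish (3) and (4) I would invoke the standard translation between parallel tractors and BGG solutions. For the adjoint tractor: a parallel section of $\Lambda^2\tilde{\mathcal{T}}$ with respect to the normal tractor connection projects, via $\Pi_0$, to a normal conformal Killing field on $\tilde{M}$ in the sense of \cite{cap-gover-cr-tractors}. For the spin tractors: the splitting operator $L_0^{\tilde{\mathcal{S}}_{\pm}}$ from \eqref{twisplit} establishes a bijection between $\nabla^{\tilde{\mathcal{S}},nor}$-parallel sections and twistor spinors, so $\chi_e:=\Pi_0\mb{s}_E$ and $\chi_f:=\Pi_0\mb{s}_F$ are twistor spinors in the asserted bundles. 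Their purity is inherited fibrewise from the purity of $s_E,s_F$, since the $\tilde{P}$-action preserves the stratification of the spin module by orbit type, and hence purity is a $\tilde{P}$-invariant property of the tractor, which passes to $\Pi_0$ because the projection to the lowest slot is a bijection on pure representatives of the corresponding orbit.

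Finally, for (1) I would use that the normal conformal holonomy $\Hol(\nabla^{\tilde{\mathcal{T}},nor})=\Hol(\nabla^{\tilde{\mathcal{S}},nor})$, as in \eqref{defhol}, must fix the two parallel pure spin tractors $\mb{s}_E$ and $\mb{s}_F$ pointwise; but the joint stabiliser of these spinors in $\Spin(n+1,n+1)$ was taken as the very definition of $G=\SL(n+1)$ at the start of Section \ref{construction}. Hence $\Hol(\nabla^{\tilde{\mathcal{T}},nor})\subset\SL(n+1)$. The only subtle point in the whole argument is the preservation of purity for $\chi_e,\chi_f$, but this is essentially algebraic once one has the parallel spin tractors, so there is no serious obstacle to the proof — the corollary is really just the translation of the preceding normality result into geometric language via the dictionary of Section \ref{sec-induced}.
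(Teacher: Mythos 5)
Your proposal is correct and follows essentially the same route the paper intends: the corollary is derived directly from the preceding normality proposition together with the observations of Section \ref{sec-induced} (the $G$-invariant decomposition $E\oplus F$, the involution $\mathbb{K}$ and the pure spinors $s_E,s_F$ give parallel tractors for the induced tractor connections, which by normality coincide with the normal ones), plus the standard dictionary between parallel tractors, normal conformal Killing fields and twistor spinors, and the fact that the joint stabiliser of $s_E,s_F$ is by definition $G\cong\SL(n+1)$. The only slightly informal step is your justification of the purity of $\chi_e,\chi_f$ (the cleanest argument is that the kernel of Clifford multiplication on the underlying spinor contains the rank-$n$ isotropic distribution corresponding to $e$ resp.\ $f$), but the paper does not spell this out either.
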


\subsection{The projective construction for higher dimensions}
For $n>2$ the curvature of a normal projective Cartan connection is still contained in $\Lambda^2(\g/\p)^*\otimes\p$ but not in $\Lambda^2(\g/\p)^*\otimes\p_{+}$, and we cannot  invoke Proposition \ref{prop-norm} to conclude  that the induced conformal Cartan connection is normal. 

\begin{prop}\label{prop-nonnormal}
 For $n>2$ The conformal Cartan connection form $\ti\om\in\Om^1(\ti\G,\ti\g)$\ induced
by the normal projective Cartan connection form $\om\in\Om^1(\G,\g)$\
is normal if and only $\om$\ is flat, in which case also $\ti\om$ is flat.
\end{prop}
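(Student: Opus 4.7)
My plan is to handle the two implications separately, with most work concentrated on the direction \emph{normal implies flat}. The reverse implication is almost immediate: if $\omega$ is flat then the extended Cartan form $\tilde{\omega}$ on $\tilde{\calG} = \calG \times_Q \tilde{P}$ is also flat, because $\tilde{\omega}$ restricts to $\omega$ on $\calG$ and its curvature function is the unique $\tilde{P}$-equivariant extension of $\kappa = 0$. A flat Cartan connection is automatically normal, which settles both the reverse direction and the addendum that $\tilde{\omega}$ is flat whenever $\omega$ is (so \emph{a fortiori} whenever $\tilde{\omega}$ is shown to be flat below).

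For the forward direction, assume $\tilde{\partial}^* \tilde{\kappa} = 0$. The starting point is the decomposition used in the proof of Proposition \ref{prop-norm}: write $\tilde{Z}_i = Z_i + (\tilde{Z}_i - Z_i)$ with $\tilde{Z}_i - Z_i \in \g^\perp$. The key projective input is that for $n \geq 3$ the harmonic curvature has a single irreducible component, the projective Weyl curvature, so by the Remark following Proposition \ref{prop-norm} the two pieces $\partial^*_1$ and $\partial^*_2$ of the projective codifferential both annihilate $\kappa_H$ separately; since the geometry is torsion-free and $\kappa$ is determined by $\kappa_H$, the same holds for $\kappa$ itself. Substituting this and subtracting the $Z_i$-contribution, one obtains
\begin{align*}
\tilde{\partial}^* \tilde{\kappa}(u)(X) = 2\sum_{i=1}^n [\kappa(u)(X_i, X), \tilde{Z}_i - Z_i].
\end{align*}
Since $\kappa(u)(X_i, X) \in \g$, $\tilde{Z}_i - Z_i \in \g^\perp$, and $\g^\perp$ is $\ad(\g)$-invariant, the right-hand side lies entirely in $\g^\perp$; so normality of $\tilde{\omega}$ is equivalent to the vanishing of this $\g^\perp$-valued expression.

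The remaining step is representation-theoretic. The assignment
\begin{align*}
\Phi : \Lambda^2(\g/\p)^* \otimes \g_0^{ss} \longrightarrow (\g/\p)^* \otimes \g^\perp, \quad W \longmapsto \Bigl( X \mapsto \sum_i [W(X_i, X), \tilde{Z}_i - Z_i] \Bigr),
\end{align*}
is $P$-equivariant, and its restriction to the irreducible Weyl sub-module (in which $\kappa_H$ takes values) is therefore either zero or injective by Schur's lemma. I expect checking that $\Phi$ is not identically zero on this sub-module to be the main obstacle. Using the explicit embedding $\sl(n+1) \hookrightarrow \so(n+1,n+1)$, $A \mapsto \mathrm{diag}(A, -A^t)$, one has a natural basis of $\g^\perp$ coming from the off-diagonal blocks, and a direct matrix calculation with a single model element of the Weyl module (which is non-zero for $n \geq 3$) should produce a non-zero $\Phi$-image. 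Injectivity then forces $\kappa_H = 0$, hence $\kappa = 0$ by regularity; thus $\omega$ is flat, and by the reverse direction $\tilde{\omega}$ is flat as claimed.
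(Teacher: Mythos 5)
Your reverse direction is fine. For the forward direction, your route is genuinely different from the paper's, and as written it has a gap at the Schur's lemma step. For Schur to give ``zero or injective'' you need $\Phi$ to be equivariant under a group for which the source is irreducible; the projective Weyl module is irreducible under $G_0$ (equivalently under $P$, with $P_+$ acting trivially), but your $\Phi$ is not $G_0$- or $P$-equivariant. The element $\sum_{i=1}^n X_i\otimes\ti Z_i$ (equivalently, the splitting of $\g/\q$ into the span of $X_1,\dots,X_n$ and $f=\p/\q$, with the dual basis taken inside $\ti\p_+$) is only $Q$-invariantly defined: since $G_0\not\subset\ti P$, the elements $\mathrm{Ad}_g\ti Z_i$ for general $g\in G_0$ no longer lie in $\ti\p_+$; likewise $\partial_1^*$ alone is $G_0$- but not $P$-equivariant. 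So $\Phi$ is only $Q_0=G_0\cap Q$-equivariant, and under $Q_0$ the Weyl module is far from irreducible, so its kernel could a priori be a nonzero proper subspace. Two further points: (i) $\kappa$ takes values in $\p$, not in $\g_0^{ss}$, so the Cotton ($\p_+$-valued) part also enters $\sum_i[\kappa(u)(X_i,X),\ti Z_i-Z_i]$ and must be shown not to interfere (it does not, since $\p_+$ is abelian so $[\,\cdot\,,Z_i]=0$ and the remaining terms lie in $\ti\p_+\cap\g^\perp=\{0\}$, but this has to be said); (ii) the decisive non-vanishing computation is only announced, not carried out, and it is really the crux of your whole argument. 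The approach is repairable without equivariance: since $\Phi$ kills the $\p_+$-part it descends to $\Lambda^2(\g/\p)^*\otimes\p/\p_+$, and $\ti\partial^*\ti\kappa=0$ forces $\Phi$ to vanish on the linear span of the $P$-orbit of $\kappa_H(u)$, which by irreducibility is all of the harmonic module once $\kappa_H(u)\neq 0$; so non-vanishing of $\Phi$ on a single model Weyl tensor (rather than injectivity) would suffice. But that computation is still missing.

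The paper avoids all of this with a much shorter argument: a normal conformal Cartan geometry is torsion-free, so $\ti\kappa$, and hence $\kappa$, would have to take values in $\g\cap\ti\p=\q$; projecting mod $\p_+$, the harmonic curvature would take values in the proper subspace $\Lambda^2(\g/\p)^*\otimes\q/\p_+$, and since the $P$-submodule generated by the values of $\kappa_H$ is contained there while the harmonic curvature module is $P$-irreducible for $n\geq 3$, one gets $\kappa_H=0$ and hence $\kappa=0$ by regularity. I recommend adopting this torsion-freeness argument; it requires no computation with the codifferential formula at all.
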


\begin{proof}
 If the induced conformal geometry is normal, then it is torsion-free, i.e. the curvature function $\ti\kappa$ takes values in $\Lambda^2(\tilde{\g}/\tilde{\p})^*\otimes(\tilde{\p}\cap\g)$.
But this is only possible if the harmonic curvature of the original projective geometry takes values in a $P$-submodule of  $\Lambda^2(\g/\p)^*\otimes\p/\p_{+}$ that is contained in $\Lambda^2(\g/\p)^*\otimes(\tilde{\p}\cap\g)/\p_{+}$,
and there is no such non-trivial $P$-invariant subspace. 
\end{proof}

\begin{rema}To relate this to the previous section, note: a Lagrange contact structure coming form a projective structure  via a correspondence space construction is torsion-free iff  it is flat, or equivalently, iff the projective structure is flat (see e.g. \cite{cap-slovak-par}).\end{rema}

 \subsubsection{Kostant codifferential of the curvature} In the non-flat case we need to understand how the normalised Cartan connection
form $\ti\om^{nor}$\ differs from $\ti\om$.  As a preliminary step for the normalisation to be carried out in the proof of Theorem \ref{prop-norm2},  we investigate the special form of
\begin{align*}\tilde{\partial}^*\tilde{\kappa}:\mathcal{G}\to(\tilde{\g}/\tilde{\p})^*\otimes \tilde{\p}\cong(\g/\q)^*\otimes \tilde{\p},
 \end{align*}
and 
\begin{align*}
\partial^*\tilde{\kappa}_0:\mathcal{G}\to(\g/\q)^*\otimes \tilde{\p}/\tilde{\p}_{+}
\end{align*}
(i.e. the composition of $\partial^*\tilde{\kappa}$ with the projection $\tilde{\p}\to\tilde{\p}/\tilde{\p}_{+}=\tilde{\g}_{0}$).
 
\begin{prop}\label{prop-delstarformnn}
 Suppose $\tilde{\omega}$ is  the conformal Cartan connection induced from a normal projective Cartan connection via the Fefferman-type construction.
Then, for any $u\in\mathcal{G}$,  $\tilde{\partial}^*\tilde{\kappa}(u)$ can be viewed as an element in 
\begin{align}\label{delkappa}
 f\otimes\Lambda^2\bar{F}
\end{align}
and  $\tilde{\partial}^*\tilde{\kappa}_0(u)$ defines an element in
\begin{align}\label{delkappa0}
f\otimes\Lambda^2 f.
\end{align}

\end{prop}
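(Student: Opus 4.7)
The plan is to compute $\tilde\partial^*\tilde\kappa$ explicitly by combining the formula from Section~\ref{feff-const} with projective normality and the decomposition $\tilde\g=\g\oplus\g^\perp$ already used in Proposition~\ref{prop-norm}.

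Working at $u\in\calG\subset\tilde\calG$, the identities $\tilde\kappa=\kappa$ together with the fact that elements of $\p$ insert trivially into $\tilde\kappa$ reduce the Kostant sum to indices $i=1,\ldots,n$, giving $\tilde\partial^*\tilde\kappa(u)(X)=2\sum_{i=1}^{n}[\kappa(u)(X_i,X),\tilde Z_i]$. Writing $\tilde Z_i=Z_i+(\tilde Z_i-Z_i)$ with $\tilde Z_i-Z_i\in\g^\perp$, the contribution from the $Z_i$-part is $\partial_1^*\kappa(u)(X)$. For projective geometry, which is $|1|$-graded with abelian $\p_+$, normality $\partial^*\kappa=0$ forces $\partial_1^*\kappa=0$ directly, leaving
\[\tilde\partial^*\tilde\kappa(u)(X)=2\sum_{i=1}^{n}[\kappa(u)(X_i,X),\tilde Z_i-Z_i].\]

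Choosing $X_i=e_{i+1}\wedge e_{n+2}$ as a basis of $\g/\p\subset\sl(n+1)\hookrightarrow\so(n+1,n+1)$, a direct Killing-form computation gives $Z_i=\tfrac{1}{2n}\,e_1\wedge e_{n+2+i}$ and $\tilde Z_i=\tfrac{1}{2n}\,\tilde v_+\wedge e_{n+2+i}$, so since $\tilde v_+=e_1+e_{2n+2}$, the difference $\tilde Z_i-Z_i=\tfrac{1}{2n}\,e_{2n+2}\wedge e_{n+2+i}\in\Lambda^2\bar F$. Because $\kappa(u)(X_i,X)\in\p$ stabilizes $\rr e_1\subset E$, its transpose action on $F=E^*$ preserves the hyperplane $\bar F=(\rr e_1)^\circ\subset F$, whence $[\p,\Lambda^2\bar F]\subset\Lambda^2\bar F$. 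Each bracket in the sum therefore lies in $\Lambda^2\bar F$, establishing the value-slot part of the first claim.

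For the cotangent slot, note that $\kappa(u)(X_i,X)$ depends on $X$ only through its image in $\g/\p$ and hence vanishes for $X\in f=\p/\q\subset\g/\q$, so $\tilde\partial^*\tilde\kappa(u)$ annihilates $f$ in its first slot. The subspace $f$ of $\g/\q\cong\tilde\g/\tilde\p$ corresponds to $L^*\otimes\ef$ and is Lagrangian with respect to the conformal form (because $\ef$ is maximally isotropic in $L^\perp/L$), so this form identifies the annihilator of $f$ in $(\g/\q)^*$ canonically with $f$ itself, yielding $\tilde\partial^*\tilde\kappa(u)\in f\otimes\Lambda^2\bar F$. The second claim then follows by projecting $\Lambda^2\bar F\subset\tilde\p$ onto $\tilde\g_0=\tilde\p/\tilde\p_+$: the image consists of those endomorphisms of $L^\perp/L$ whose range lies in $\ef$, canonically identified with $\Lambda^2\ef\cong\Lambda^2 f$.

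The main technical obstacle will be the explicit computation of the Killing-dual bases, since this requires choosing bases of $\g/\q$ compatible with both the projective and conformal gradings and carefully tracking the $\tilde B$-pairings; once the identification $\tilde Z_i-Z_i\in\Lambda^2\bar F$ is in place, the remaining steps — bracket closure, vanishing on $f$, and projection to $\tilde\g_0$ — reduce to the linear algebra outlined above.
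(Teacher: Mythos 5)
Your proof is correct, and it reaches both containments of the proposition, but the central step --- placing the values of $\tilde{\partial}^*\tilde{\kappa}(u)$ in $\Lambda^2\bar{F}$ --- is done by a genuinely different route than the paper's. The paper argues basis-freely: it shows that the span of the $\tilde{Z}_i$ is contained in the annihilator of the $\p$-submodule $\bar{F}+L=\p\cdot\tilde{v}_+ +L$, so that $\tilde{\partial}^*\tilde{\kappa}(u)(X)$ annihilates $\bar{F}+L$; separately this element lies in $\g^{\perp}=(E\otimes F)_{Tr}\oplus\Lambda^2E\oplus\Lambda^2F$ because $\tilde{Z}_i-Z_i\in\g^{\perp}$ and $[\g,\g^{\perp}]\subset\g^{\perp}$; the $\Lambda^2E$- and trace-parts are then eliminated using skew-symmetry and the vanishing on $\pi_E(\tilde{v}_+)$. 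You instead compute $\tilde{Z}_i-Z_i$ explicitly in an adapted basis, find it proportional to $e_{2n+2}\wedge e_{n+2+i}\in\Lambda^2\bar{F}$, and close the argument with the single observation that $\Lambda^2\bar{F}$ is a $\p$-submodule (since $\p$ stabilises $\rr e_1$ and hence its annihilator $\bar{F}\subset F\cong E^*$). Your version is more elementary once the dual bases are pinned down, and it makes the lift of Remark \ref{rem-lift} concrete; the paper's version avoids the coordinate bookkeeping you rightly flag as the delicate part. (Your normalising constants are not quite right --- the Killing form of $\so(n+1,n+1)$ contributes an extra factor --- but only the membership $\tilde{Z}_i-Z_i\in\Lambda^2\bar{F}$ enters the argument, and that is insensitive to such corrections; one also checks, as you implicitly do, that the two dual bases carry the \emph{same} constant so that the difference really collapses to a single wedge.) One caveat: the assertion that normality forces $\partial^*_1\kappa=0$ ``directly'' for the $|1|$-graded projective case is a little too quick, since a priori the $\p_+$-valued component of $\partial^*\kappa$ is $\partial^*_1(\kappa_{\g_0})+\partial^*_2(\kappa_{\p_+})$ and these two terms could cancel; the clean justification is the Remark following Proposition \ref{prop-norm} (a single harmonic curvature component, torsion-freeness, and Corollary 3.2 of \cite{cap-correspondence}). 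The paper relies on exactly the same fact at the same point, so this is a matter of citation rather than a gap. Your remaining steps --- annihilation of $f$ in the first slot, identification of its annihilator with $f$ via maximal isotropy, and the projection $\Lambda^2\bar{F}\to\Lambda^2\ef\cong\Lambda^2 f$ --- coincide with the paper's.
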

In this proof, we will often use the identification of $(\mathbb{R}^{n+1,n+1})^*$ with ${\mathbb{R}^{n+1,n+1}}$ provided by the bilinear form \eqref{form(n,n)}, which identifies $E^*$ with $F$.

\begin{proof}
A priori, $\tilde{\partial}^*\tilde{\kappa}(u)$ is an element of $(\g/\q)^*\otimes \tilde{\p}$. Since elements of $\p$ insert trivially into $\tilde{\kappa}(u)$ we have that $\partial^*\tilde{\kappa}(u)$ annihilates $f=\p/\q$,  and since $f$ is maximally isotropic  $\partial^*\tilde{\kappa}(u)$ can thus be viewed as an element in  $f\otimes\tilde{\p}$.

Next we determine the subspace of $\tilde{\p}\subset\Lambda^2\mathbb{R}^{n+1,n+1}$ where $\tilde{\partial}^*\tilde{\kappa}(u)(X)$ takes its values using \eqref{norm}. The space spanned by the elements $\tilde{Z}_i$, $i=1,...n,$ can be characterised as the annihilator of the vertical space $f=\p/\q,$ i.e. it is the space of all $\tilde{Z}\in\tilde{\p}_{+}$ such that $B(\tilde{Z},X)=0$ for all $X\in\p,$ where $B$ denotes the Killing form. One can easily see from the explicit form of $\p$ and $\tilde{v}_{+}$ (see \eqref{P} and \eqref{tildev+})
 that the image of the action of $\p$ on $\tilde{v}_+$ is $\bar{F}+L.$ 
  Furthermore, the action of an element $Z\in\tilde{\mathfrak{p}}_{+}$ annihilates $\tilde{v}_+$ and maps $X\cdot  \tilde{v}_+\in\bar{F}+L$ to (a multiple) of $B(Z,X)\tilde{v}_+$.  
Thus the subspace spanned by the $\tilde{Z}_i$, $i=1,...n,$ is contained the annihilator of $\bar{F}+L$ in $\tilde{\g}$.  
Note that  $\bar{F}+L$ is a $\p$ submodule and so is the annihilator of that subspace. Since $\tilde{\kappa}(u) (X,X_i)\subset\mathfrak{p}$ this implies that   $\tilde{\partial}^*\tilde{\kappa}(u)(X)$ annihilates $\bar{F}+L$.

 Now, the $\g$-module decomposition of $\tilde{\g}$ looks as follows
\begin{align*}
\ti{\g}=\Lambda^2(E\oplus F)=\underbrace{(E\otimes F)_0}_{\mathfrak{g}}\oplus \underbrace{(E\otimes F)_{Tr}\oplus \Lambda^2E\oplus\Lambda^2F}_{\mathfrak{g}^{\perp}}
\end{align*}
and in block-matrices
\begin{align*}
= \begin{pmatrix}
    E\otimes F& \Lambda^{2}E\\
    \Lambda^{2}F &E\otimes F 
  \end{pmatrix}.
\end{align*}
The assumption that the projective Cartan connection be normal  implies that $\tilde{\partial}^*\tilde{\kappa}(u)(X)\subset\g^{\perp}$, by \eqref{norm} and since $\tilde{Z}_i-Z_i\in\g^{\perp}.$
 Vanishing of $\tilde{\partial}^*\tilde{\kappa}(u)(X)$ on $\bar{F}$ and skew-symmetry implies that the $\Lambda^2E$-part of $\tilde{\partial}^*\tilde{\kappa}(u)(X)$ has to vanish. Vanishing on $\tilde{v}_+=\pi_E(\tilde{v}_+)+\pi_F(\tilde{v}_+)$ and on $\pi_F(\tilde{v}_+)\subset \bar{F}$ implies vanishing on $\pi_E(\tilde{v}_+)$. But then $\tilde{\partial}^*\tilde{\kappa}(u)(X)$ has also trivial $(E\otimes F)_{Tr}$-part and is indeed contained in the subspace of maps in $\Lambda^2F$ that vanish on $\pi_E(\tilde{v}_+),$ i.e. in $\Lambda^2\bar{F}$.
Then $\partial^*\tilde{\kappa}(u)(X)\subset\Lambda^2\bar{F}$ implies that  $\partial^*\tilde{\kappa}_0(u)(X)\subset \Lambda^2f$, which shows \eqref{delkappa0}.

\end{proof}

\begin{rema}\label{rem-lift}
We have seen in the proof of Proposition \ref{prop-norm} that $\tilde{\p}_{+}\cap\g^{\perp}=\{0\},$ and thus the restriction of the projection $\tilde{\p}\to\tilde{\p}/\tilde{\p}_{+}$ to the subspace $\tilde{\p}\cap\g^{\perp}$ is injective. Note that this implies that for every $\phi_0\in f\otimes \Lambda^2 f$ there is a unique element $\phi\in f\otimes\Lambda^2\bar{F}\subset f\otimes (\tilde{\p}\cap\g^{\perp})$ that projects onto $\phi_0$. 
\end{rema}

\subsubsection{Reduced Weyl-structures}

As a technical preliminary
to study how the normalised Cartan connection
form $\ti\om^{nor}$\ differs from $\ti\om$
we now relate the Weyl structures of the original Cartan
geometry $(\G,\om)$ and those of $(\ti\G,\ti\om)$:
\begin{prop}\label{prop-Weyl}
Any projective Weyl structure
\begin{align*}
  \G_0\overset{j}{\embed}\G
\end{align*}
induces a  conformal Weyl structure
\begin{align*}
  \G_0\embed\ti\G_0\overset{\ti{j}}{\embed}\ti \G.
\end{align*}
\end{prop}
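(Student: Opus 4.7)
The approach is to construct $\tilde\G_0$ inside $\tilde\G$ by extending the structure group $G_0\cap Q$ of $\G_0\subset\G\subset\tilde\G$ up to $\tilde G_0$, using the identification $\tilde\G = \G\times_Q\tilde P$. Two algebraic facts about the matrix groups displayed in Section~\ref{construction} make this work: (i) the unipotent radical $P_+$ of $P$ is already contained in $Q$, which is immediate from the explicit shape of $P_+$ (top row $Y^t$) and the form \eqref{P} of $Q$; (ii) the restriction of the projection $\tilde P\to\tilde G_0=\tilde P/\tilde P_+$ to the reductive subgroup $G_0\cap Q$ is injective, because $G_0\cap\tilde P_+=\{e\}$ (reductive meets unipotent trivially), so $G_0\cap Q$ sits naturally as a subgroup of $\tilde G_0$.

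From (i) and the Levi decomposition $P=G_0\cdot P_+$ one obtains $\G=\G_0\cdot P_+\subset\G_0\cdot Q$, so every $Q$-orbit in $\G$ meets $\G_0$; combined with the $G_0$-principality of the reduction $\G_0\subset\G$, this shows that the natural map
\begin{align*}
\G_0/(G_0\cap Q)\longrightarrow \G/Q=\tilde M
\end{align*}
is a diffeomorphism. Invoking (ii), I then set
\begin{align*}
\tilde\G_0 \;:=\; \G_0\times_{G_0\cap Q}\tilde G_0,
\end{align*}
which is automatically a principal $\tilde G_0$-bundle over $\G_0/(G_0\cap Q)=\tilde M$, and define
\begin{align*}
\tilde j:\tilde\G_0\longrightarrow\tilde\G,\qquad [u_0,g]\longmapsto [u_0,g],
\end{align*}
where on the right $g$ is viewed in $\tilde P$ via $\tilde G_0\hookrightarrow\tilde P$. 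The map $\tilde j$ is well-defined (the equivalence relation on the left is by $G_0\cap Q\subset Q$), $\tilde G_0$-equivariant, and injective (since $\G_0\subset\G$ is a $G_0$-reduction, any identification $u_0 q=u_0'$ with $q\in Q$ forces $q\in G_0\cap Q$). Hence $\tilde j(\tilde\G_0)\subset\tilde\G$ is a reduction of structure group to $\tilde G_0$, i.e., a conformal Weyl structure, and the factored inclusion $\G_0\hookrightarrow\tilde\G_0$ is simply $u_0\mapsto [u_0,e]$.

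Both matrix checks (i) and (ii) are short calculations, so the proof has no real obstacle beyond settling on the correct construction. The conceptual content is the functoriality of Weyl reductions under the Fefferman-type extension of structure group: once one observes that the radical $P_+$ of the projective parabolic is entirely absorbed into the stabiliser $Q=G\cap\tilde P$, the projective reduction $\G_0\subset\G$ descends to a reduction over the correspondence space $\tilde M$, and the algebra of Levi decompositions does the rest to promote $G_0\cap Q$ up to $\tilde G_0$.
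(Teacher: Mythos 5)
Your construction is essentially identical to the paper's: both hinge on the two facts $P_+\subset Q$ (so that $\G_0/Q_0\cong \G/Q=\ti M$) and $Q_0=G_0\cap Q\subset\ti G_0$, and both then set $\ti\G_0:=\G_0\times_{Q_0}\ti G_0$ and extend the inclusion $\G_0\embed\G\embed\ti\G$ equivariantly. The one point to tighten is your step (ii): injectivity of the projection $Q_0\to\ti P/\ti P_+$ does not by itself place $Q_0$ inside the Levi subgroup $\ti G_0\subset\ti P$ compatibly with the inclusion $Q_0\subset\ti P$ that you use to define $\ti j$; the paper instead checks directly that $G_0\cap\ti P=G_0\cap\ti G_0$, which is exactly what is needed and is immediate from the explicit matrix form of the groups.
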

\begin{proof}
\begin{align*}
  Q_0=G_0\cap Q=G_0\cap(G\cap\ti P)=G_0\cap \ti P=G_0\cap \ti G_0.
\end{align*}
We have $G_0\cong P/P_+$, and since $P_+\subset Q$, $Q_0\cong Q/P_+$,
and thus $G_0/Q_0\cong G/P$.
Therefore the reduction
$\G_0\overset{j}{\embed} \G$\ from $P$\ to $G_0$ over
the manifold $M$ induces a reduction from $Q\subset P\subset \ti P$\ to $Q_0\subset G_0\subset \ti Q$ over $\ti M$.
Composing the embedding $\G_0\overset{j}{\embed} \G$\ with
the natural embedding $\G\embed\ti \G$,
one obtains a reduction
\begin{align*}
  \G_0\overset{\ti{j}}{\embed}\ti \G
\end{align*}
from $\ti P$\ to $Q_0\subset\ti G_0$ over $\ti M$.
Let $\ti\G_0:=\G_0\times_{Q_0}\ti G_0$, then
the embedding $\G_0\embed \ti\G_0$\ is natural
and $\ti{j}$\ canonically extends to an embedding
of the $\ti G_0$-bundle $\ti\G_0$\ into the $\ti P$-bundle $\ti\G$,
which we simply denote by $\ti{j}$\ again.
We therefore
see that $\ti{j}$
is a reduced Weyl structure of the conformal Cartan bundle $\ti\G$.
\end{proof}

A version of this result in a more general context has been proved in \cite{jesse-quaternionic}.

\subsubsection{Preserved spin-tractors and induced twistor spinors}
Let $\mb{s}_F\in\Ga(\tilde{\mc{S}}_-)$ be the spin tractor with kernel $\ti{\mathcal{F}}\subset\tilde{\mathcal{T}}$ as in \ref{sec-induced}.
\def\t{\otimes}
\begin{thm}\label{prop-norm2}
$\mb{s}_F\in\Ga(\mc{S}_-)$
is parallel with respect to the normal conformal spin tractor
connection $\na^{\mc{S}_-,nor}\mb{s}_F=0.$
In particular, the conformal spin structure $(M,\mc{C})$\ carries
a canonical (pure) twistor spinor $\chi_f\in\Ga(S_-[\frac{1}{2}])$.
\end{thm}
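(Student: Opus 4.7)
Since $\mb{s}_F$ is $\na^{\ti{\mc{S}}_-}$-parallel by the construction of the Fefferman-type tractor connections in Section~\ref{sec-induced}, it suffices to show that the correction between the non-normal and the normal connection preserves $\mb{s}_F$. Writing $\Phi=\ti\om^{nor}-\ti\om$ for the difference of Cartan connections (a $\ti\p$-valued horizontal equivariant $1$-form, since both induce the same underlying conformal structure), the corresponding difference of induced spin tractor connections is given by $\Phi$ acting on spinors via the spin representation. The theorem thus reduces to showing that $\Phi$ takes values in the annihilator of $s_F$ in $\ti\p$. The natural target subspace is $\Lambda^2 \bar F\subset\ti\p$, which annihilates $s_F$ by the following Clifford-algebra fact: since $F\cdot s_F=0$ and $\bar F\subset F$, for any $v,w\in\bar F$ one has $(v\cdot w-w\cdot v)\cdot s_F=0$, so $\Lambda^2\bar F\subset\so(n+1,n+1)$ acts trivially on $s_F$.

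The strategy is to carry out the standard iterative normalisation of Cartan connections (cf.\ \cite{cap-slovak-par}) while tracking that every correction remains in $\Lambda^2\bar F$. Proposition~\ref{prop-delstarformnn} shows that the initial obstruction $\ti\partial^*\ti\kappa$ takes values in $f\otimes\Lambda^2\bar F$, with leading $\ti\g_0$-component in $f\otimes\Lambda^2 f$, and Remark~\ref{rem-lift} provides a unique lift of the latter back to $f\otimes\Lambda^2\bar F$. Using this lift, the first correction $\Phi^{(0)}$ is chosen with values in $\Lambda^2\bar F$. Inductively one claims that at each stage the $\ti\partial^*$-image of the current curvature still sits in $f\otimes\Lambda^2\bar F$, so the next correction $\Phi^{(k)}$ may again be placed in $\Lambda^2\bar F$. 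Summing, $\Phi=\sum_k\Phi^{(k)}$ takes values in $\Lambda^2\bar F$, giving $\Phi\cdot\mb{s}_F=0$ and hence $\na^{\ti{\mc{S}}_-,nor}\mb{s}_F=0$.

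The main technical obstacle is verifying this induction step. After correcting by $\Phi^{(k)}\in\Lambda^2\bar F$, the new curvature differs from the old by $\partial\Phi^{(k)}$ together with bracket terms $[\Phi^{(k)},\cdot]$, and one must check that applying $\ti\partial^*$ to these contributions leaves us in $f\otimes\Lambda^2\bar F$ modulo terms of strictly higher homogeneity (which are removed by subsequent iterations). This hinges on the $Q$-invariance of $\Lambda^2\bar F$, the closedness of $\Lambda^2\bar F$ under the relevant brackets with elements of $\g\subset\ti\g$, and the compatibility of the Kostant Laplacian with the isotypic decomposition induced by the embedding $\g\embed\ti\g$. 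A convenient technical setting for this bookkeeping is the reduced Weyl structure $\G_0\embed\ti\G_0\embed\ti\G$ of Proposition~\ref{prop-Weyl}, since on the reduced bundle the correction can be computed slot by slot.

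Once $\na^{\ti{\mc{S}}_-,nor}\mb{s}_F=0$ is established, the BGG machinery recalled in Section~\ref{sec-spintrac} identifies $\chi_f:=\Pi_0(\mb{s}_F)\in\Ga(S_-[\tfrac12])$ as a twistor spinor, i.e.\ a solution of $\Th^{\ti{\mc{S}}}_0\chi_f=0$, with $\mb{s}_F=L_0^{\ti{\mc{S}}_-}(\chi_f)$. Purity of $\chi_f$ follows from purity of $\mb{s}_F$ via compatibility of $\Pi_0$ with tractor Clifford multiplication \eqref{traCli}: the projection to $TM$ of the Clifford kernel of $\mb{s}_F$ is a maximally isotropic subspace annihilating $\chi_f$.
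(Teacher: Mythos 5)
Your first normalisation step and the Clifford-algebra observation that $\La^2\bar F$ annihilates $s_F$ match the paper, but the induction you propose for the remaining steps cannot be carried out, and this is where the real content of the theorem lies. In the $|1|$-graded conformal setting there are exactly two homogeneities: after the first correction $\Psi^0$ (whose $\ti\g_0$-component is prescribed and which you may indeed lift into $\La^2\bar F$ by Remark~\ref{rem-lift}), the second correction $\Psi^1$ is forced to take values in $\ti\p_+$ --- there is no further freedom of lift. But $\La^2\bar F\subset\g^\perp$ and, as shown in the proof of Proposition~\ref{prop-norm} (and restated in Remark~\ref{rem-lift}), $\g^\perp\cap\ti\p_+=\{0\}$; hence $\La^2\bar F\cap\ti\p_+=\{0\}$ and the claim that ``the next correction $\Phi^{(k)}$ may again be placed in $\La^2\bar F$'' fails already at $k=1$ unless $\Psi^1=0$, i.e.\ unless $\ti\om^1$ is already normal, which is neither established nor true in general. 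Your phrase ``modulo terms of strictly higher homogeneity (which are removed by subsequent iterations)'' defers exactly the step that breaks: the total correction $\Phi=\Psi^0+\Psi^1$ does \emph{not} take values in $\La^2\bar F$, so one cannot conclude $\Phi\cdot\mb{s}_F=0$ this way.

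The paper closes this gap by a different argument for the second step that makes no attempt to show $\Psi^1$ annihilates $s_F$. Since $\Psi^1$ is $\ti\p_+$-valued, its Clifford action on $\ti{\mc{S}}_-$ maps everything into the top slot $S_+[-\tfrac12]$ of the filtration; combined with $\na^{\ti{\mc{S}}_-,1}\mb{s}_F=0$ this gives $\na^{\ti{\mc{S}}_-,nor}\mb{s}_F\in\Ga(S_+[-\tfrac12])$, i.e.\ the bottom-slot equation $D_c\chi_f+\tfrac{1}{\sqrt2}\ga_c\tau=0$ holds. This already shows $\chi_f$ is a twistor spinor and forces $\mb{s}_F=L_0^{\ti{\mc{S}}_-}(\chi_f)$; the vanishing of the remaining top slot is then a differential consequence of the twistor equation (normality of twistor spinors as first BGG solutions, cf.\ \ref{sec-spintrac}). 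You would need to replace your induction by this (or an equivalent) argument; the final paragraph of your proposal on purity and the identification $\chi_f=\Pi_0(\mb{s}_F)$ is fine once parallelism is actually established.
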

\begin{proof}
  We are going to normalise the Cartan connection 
$\ti\om\in\Om^1(\ti\G,\ti\g)$\ 
 that is induced by the  projective Cartan connection $\om\in\Om^1(\G,\g)$.
  Any other conformal Cartan connection $\ti\om'$ differs
  from $\ti\om$\ by some $\Psi\in\Om^1(\ti\G,\ti\g)$: $\ti\om'=\ti\om+\Psi$.
  This $\Psi$\ must vanish on vertical fields and be $P$-equivariant.
  The condition on ${\ti\om}'$\ to induce the same conformal structure on $\ti M$\ as
  $\ti\om$ is that $\Psi$\ has values in $\ti\p\subset\ti\g$.
  One can therefore regard  $\Psi$\ as a $P$-equivariant
  function $\Psi:\ti\G\goesto (\ti\g/\ti\p)^*\t\ti\p$.

  The general theory of parabolic geometries, \cite{cap-slovak-par}, tells us that there is a unique such $\Psi$\ such that the curvature function $\ti \kappa'$\ 
  of $\ti\om'$ satisfies $\ti\delstar \ti \ka'=0$, and then
  $\ti\om'$\ is the normal conformal Cartan connection $\ti\om^N$.

  The normalisation of $\ti\om$\ proceeds by homogeneity of $(\ti\g/\ti\p)^*\t\ti\p,$
which decomposes into two homogeneous components according to the decomposition 
$\ti\p=\ti\g_0\oplus\ti\p_{+}$. The failure
  of $\ti\om$\ to be 
  normal is $\ti\delstar(\ti \ka):\G\goesto (\ti\g/\ti\p)^*\t\ti\p$.
  In the first step of normalisation one looks for a ${\Psi}^{0}$ such
  that ${\ti\om}^{1}=\ti\om+{\Psi}^{0}$\ has $\ti\delstar\ti\ka'$\ taking
  values in the highest homogeneity $\ti\delstar\ti\ka':\ti\G\goesto (\ti\g/\ti\p)^*\t\ti\p_+$.

  To write down this first normalisation it is useful
  to employ a Weyl structure $\ti\G_0\overset{j}{\embed}\ti\G$,
  and by Proposition \ref{prop-Weyl}\ we can take a
  Weyl structure that is induced by a $Q_0$-reduction 
  \begin{align*}
    \G_0\overset{j}{\embed}\G\embed\ti \G.
  \end{align*}

  This allows us to project $\ti\delstar\ti \ka$\ to $(\ti\delstar\ti\ka)_{0}:\G_0\goesto (\ti\g/\ti\p)^*\t\ti\g_0$
  and to employ the $\ti G_0$-equivariant Kostant Laplacian
  $\ti\lapl:(\ti\g/\ti\p)^*\t\ti\g_0\goesto (\ti\g/\ti\p)^*\t\ti\g_0$.
  For the first normalisation step we need to form a map
  ${\Psi}^{0}:\ti\G\goesto (\ti\g/\ti\p)^*\t\ti\p$ that agrees
  with $-\ti\lapl^{-1}(\ti\delstar\ti\ka)_{0}$ in the $\ti\g_0$-component.
  If we have formed any such ${\Psi}^{0}$\ along $\G_0\overset{j}{\embed}\ti\G$
  we can just equivariantly extend this to all of $\ti\G$.

  Now $\ti\lapl$ restricts to an invertible endomorphism
  of $((\ti\g/\ti\p)^*\t\ti\g_0)\cap\im\ti\delstar$ that
  acts by scalar multiplication on each of the three
  $\ti G_0$-irreducible components of $((\ti\g/\ti\p)^*\t\ti\g_0)\cap\im\ti\delstar$.
  To write down this decomposition we use that under $\ti G_0$ one
  has
    $\ti\g/\ti\p\cong\rr^{n,n}.$
  As a $\ti G_0$-module, $({\rr^{n,n}}^{*}\otimes\ti\g_0)\cap \im\ti\delstar$\ decomposes
  into 
  \begin{align*}
    (({\rr^{n,n}}^{*}\otimes\ti\g_0)\cap \im\ti\delstar)_{tr}\oplus (({\rr^{n,n}}^{*}\otimes\ti\g_0)\cap \im\ti\delstar)_{alt}\oplus(({\rr^{n,n}}^{*}\otimes\ti\g_0)\cap \im\ti\delstar)_{\odot},
  \end{align*}
  where
  \begin{align*}
    &(({\rr^{n,n}}^{*}\otimes\ti\g_0)\cap \im\ti\delstar)_{tr}=\rr^{n,n}, \\
    &(({\rr^{n,n}}^{*}\otimes\ti\g_0)\cap \im\ti\delstar)_{alt}=\La^3\rr^{n,n}
  \end{align*}
  and
  $(({\rr^{n,n}}^{*}\otimes\ti\g_0)\cap \im\ti\delstar)_{\odot}$ the highest weight
  component, which is the trace- and alternation-free part.
  
  Now $\rr^{n,n}$\ has the $Q_0$-invariant subspace $f$,
  and, it was shown in Proposition \ref{prop-delstarformnn} that
  \begin{align*}
    \ti\delstar\ti\ka_{0}\in f\t\La^2 f.
  \end{align*}
  This shows that $\ti\delstar\ti\ka_{0}$\ has
  no trace-component, and since $\La^3 f\subset f\t\La^2 f$,
  we have that 
  \begin{align*}
    &(\ti\delstar\ti\ka_{\ti\g_0})_{tr}=0,\\
    &(\ti\delstar\ti\ka_{\ti\g_0})_{alt}\in f\t\La^2 f\ \mr{and}\\
    &(\ti\delstar\ti\ka_{\ti\g_0})_{\odot}\in f\t\La^2 f.
  \end{align*}
  Since $\ti\lapl$\ preserves these components it follows that also 
  \begin{align*}
    -\ti\lapl^{-1}\ti\delstar\ti\ka_{0}\in f\t\La^2 f.
  \end{align*}
  For each element in $f\t\La^2 f\subset (\ti\g/\ti\p)^*\t\ti\g_0$
  there exists a unique element in $(\ti\g/\ti\p)^*\t\La^2 \bar{F}\subset (\ti\g/\ti\p)^*\t\ti \p$\ with that $\ti\g_0$-component, cf. Remark \ref{rem-lift}.
  This defines a canonical
  \begin{align*}
    \Psi^0:\G_0\goesto (\ti\g/\ti\p)^*\t \La^2 \bar{F}\subset (\ti\g/\ti\p)^*\t\ti\p
  \end{align*}
  for the first normalisation step and we set ${\ti\om}^{1}=\ti\om+{\Psi}^{0}$.
  
  Since $F\subset \rr^{n+1,n+1}$\ is the kernel of the pure spinor
  $s_F\in\De_-^{n+1,n+1}$ we see that the tractor spinor $\mb{s}_F$
  induced by the constant map
  \begin{align*}
    \G\goesto\De_-^{n+1,n+1},\ 
    u\mapsto s_F
  \end{align*}
  is still parallel with respect to ${\ti\om}^{1}$.

  Now we want to see that also after the second normalisation
  step, which yields the normal conformal Cartan connection
  ${\ti\om}^{2}=\ti\om^{nor}$, the tractor spinor $\mb{s}_F$\ is still
  parallel.
  One has $\ti\om^{nor}={\ti\om}^{1}+{\Psi}^{1}$,
  with ${\Psi}^{1}:\G_0\goesto\ti\p_+$, and
  we denote the spin tractor connections on $\ti{\mc{S}}_-$\ induced by
  ${\ti\om}^{1}$\ and $\ti\om^{nor}$
  by $\na^{\ti{\mc{S}}_-,1}$ resp. $\na^{\ti{\mc{S}}_-,nor}$.

  Recall from \ref{sec-spintrac}\ that
  $[\ti{\mc{S}}_{-}]_g=
  \begin{pmatrix}
    S_{+}[-\frac{1}{2}] \\
    S_{-}[\frac{1}{2}]
  \end{pmatrix}$.
Now $\na^{\ti{\mc{S}}_-,1}\mb{s}_F=0$\ and
  \begin{align*}
    (\na^{\ti{\mc{S}}_-,nor}\mb{s}_F-\na^{\ti{\mc{S}}_-,1}\mb{s}_F)={\Psi}^{1}\mb{s}_F\in\Ga(S_+[-\frac{1}{2}])
    \subset\Ga(\ti{\mc{S}}_-),
  \end{align*}
  and therefore
  \begin{align}\label{nanormbs}
    \na^{\ti{\mc{S}}_-,nor}\mb{s}_F\in\Ga(S_+[-\frac{1}{2}]).
  \end{align}
  Let $\mb{s}_F=
  \begin{pmatrix}
    \tau \\ \chi_f
  \end{pmatrix}\in\Ga(\ti{\mc{S}}_-)$.
  Then \eqref{nanormbs}\ says explicitly that
  \begin{align*}
    \begin{pmatrix}
      D_c\tau+\frac{1}{\sqrt{2}}\PP_{cp}\ga^p\chi_f\\
      D_c\chi_f+\frac{1}{\sqrt{2}}\ga_c\tau
    \end{pmatrix}
=
\begin{pmatrix}
  * \\ 0
\end{pmatrix}.
  \end{align*}
  It follows in particular that $\chi_f$\ is a twistor spinor,
  and necessarily $\tau=\frac{1{\sqrt{2}}n}\crd\chi_f$ - which
  shows that $\mb{s}_F=L_0^{\ti{\mc{S}}_-}(\chi_f)$.
  But then $D_c\tau+\frac{1}{\sqrt{2}}\PP_{cp}\ga^p\chi_f=0$\ is a
  differential consequence of that equation, and
  thus indeed 
  \begin{align*}
    0=\na^{\ti{\mc{S}}_-,nor}(L_0^{\ti{\mc{S}}_-}\chi_f)=\na^{\ti{\mc{S}}_-,nor}\mb{s}_F.
  \end{align*}
\end{proof}

Since $\Hol(\mc{C})=\Hol(\na^{\ti{\mc{S}},nor})$ Proposition \ref{prop-norm2} in particular implies that the induced conformal structures have reduced holonomy:
\begin{cor}
The conformal holonomy $\Hol(\mc{C})$\ is 
contained in
the isotropy subgroup of $s_{F}\in\De_-^{n+1,n+1}$ in $Spin(n+1,n+1)$; this is
$SL(n+1)\ltimes {\Lambda}^{2}(\mathbb{R}^{n+1})^{*}\subset Spin(n+1,n+1)$.
\end{cor}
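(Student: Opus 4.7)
The plan is to deduce this corollary directly from Theorem \ref{prop-norm2} by invoking the standard principle that a parallel tractor section reduces the tractor holonomy to the stabiliser of the corresponding algebraic object in the modelling representation. By the definition \eqref{defhol} one has $\Hol(\mc{C})=\Hol(\na^{\ti{\mc{S}},nor})$. Theorem \ref{prop-norm2} supplies a $\na^{\ti{\mc{S}},nor}$-parallel section $\mb{s}_F\in\Ga(\ti{\mc{S}}_-)$; fixing a base point $x\in\ti M$ and an identification of the fibre $\ti{\mc{S}}_{-,x}$ with $\De_-^{n+1,n+1}$ which sends $\mb{s}_F(x)$ to $s_F$, parallel transport around any loop based at $x$ must then fix $s_F$. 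This immediately yields the inclusion $\Hol(\mc{C})\subset\mathrm{Stab}_{\Spin(n+1,n+1)}(s_F)$.

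It remains to identify this stabiliser with $\SL(n+1)\ltimes\La^2(\rr^{n+1})^*$. Since $s_F$ is a pure spinor with Clifford kernel $F\subset\rr^{n+1,n+1}$, every element of $\Spin(n+1,n+1)$ that fixes $s_F$ necessarily preserves $F$ as a maximally isotropic subspace, and therefore lies in the parabolic subgroup $\ti P_F$ stabilising $F$. Using the $\g$-module decomposition of $\ti\g$ recorded in the proof of Proposition \ref{prop-delstarformnn}, one sees that the Lie algebra $\ti\p_F$ has $|1|$-grading $(E\otimes F)\oplus\La^2 F$ with reductive part $E\otimes F\cong\gl(n+1)$ and nilpotent radical $\La^2 F\cong\La^2(\rr^{n+1})^*$. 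Under the spin representation the centre $\rr\subset\gl(n+1)$ scales $s_F$ by a non-trivial character, while the semisimple part $\sl(n+1)$ and the nilpotent radical both fix $s_F$ pointwise (the latter because $F\cdot s_F=0$ forces $\La^2 F\cdot s_F=0$ in the Clifford action). Requiring the action on $s_F$ itself, not merely on the line $\rr s_F$, to be trivial thus excludes the central scaling factor and leaves precisely $\SL(n+1)\ltimes\La^2(\rr^{n+1})^*$.

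The only non-trivial verification in the above is the representation-theoretic fact that $\sl(n+1)$ acts trivially on the pure spinor $s_F$. This is classical and can either be read off from Chevalley's exterior-algebra realisation of the spin representation, or extracted from the pure-spinor conventions already fixed in \cite{baum-pseudospin}, so no serious obstacle is anticipated.
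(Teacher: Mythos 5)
Your proof is correct and takes essentially the same route as the paper: the paper's entire argument is the observation that $\Hol(\mc{C})=\Hol(\na^{\ti{\mc{S}},nor})$ combined with the parallel tractor $\mb{s}_F$ from Theorem \ref{prop-norm2}, with the identification of the stabiliser simply asserted. Your extra verification that the isotropy group of the pure spinor $s_F$ is $\SL(n+1)\ltimes\La^2(\rr^{n+1})^*$ --- via preservation of the Clifford kernel $F$, the vanishing of the $\La^2 F$-action on $s_F$, and the non-trivial central character --- is sound and merely fills in a detail the paper leaves implicit.
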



\begin{thebibliography}{BFGK90}

\bibitem[Alt10]{jesse-quaternionic}
J.~Alt.
\newblock On quaternionic contact {F}efferman spaces.
\newblock {\em Differential Geom. Appl.}, 28(4):376--394, 2010.

\bibitem[Bau81]{baum-pseudospin}
H.~Baum.
\newblock {\em Spin-{S}trukturen und {D}irac-{O}peratoren \"uber
  pseudoriemannschen {M}annigfaltigkeiten}, volume~41 of {\em Teubner-Texte zur
  Mathematik [Teubner Texts in Mathematics]}.
\newblock BSB B. G. Teubner Verlagsgesellschaft, Leipzig, 1981.

\bibitem[BEG94]{thomass}
T.~N. Bailey, M.~G. Eastwood, and A.~R. Gover.
\newblock Thomas's structure bundle for conformal, projective and related
  structures.
\newblock {\em Rocky Mountain J. Math.}, 24(4):1191--1217, 1994.

\bibitem[BFGK90]{baum-friedrich-twistors}
H.~Baum, T.~Friedrich, R.~Grunewald, and I.~Kath.
\newblock {\em Twistor and {K}illing spinors on {R}iemannian manifolds}, volume
  108 of {\em Seminarberichte [Seminar Reports]}.
\newblock Humboldt Universit\"at Sektion Mathematik, Berlin, 1990.

\bibitem[Bry06]{bryant-3planes}
R.~L. Bryant.
\newblock Conformal geometry and 3-plane fields on 6-manifolds.
\newblock {\em Developments of Cartan Geometry and Related Mathematical
  Problems,RIMS Symposium Proceedings (Kyoto University)}, 1502:1--15, 2006.

\bibitem[{\v{C}}ap05]{cap-correspondence}
A.~{\v{C}}ap.
\newblock Correspondence spaces and twistor spaces for parabolic geometries.
\newblock {\em J. Reine Angew. Math.}, 582:143--172, 2005.

\bibitem[{\v{C}}ap06]{cap-constructions}
A.~{\v{C}}ap.
\newblock Two constructions with parabolic geometries.
\newblock {\em Rend. Circ. Mat. Palermo (2) Suppl.}, (79):11--37, 2006.

\bibitem[Car24]{cartan-projective}
E.~Cartan.
\newblock Sur les vari\'et\'es \`a connexion projective.
\newblock {\em Bull. Soc. Math. France}, 52:205--241, 1924.

\bibitem[CD01]{BGG-Calderbank-Diemer}
D.~M.~J. Calderbank and T.~Diemer.
\newblock {Differential invariants and curved Bernstein-Gelfand-Gelfand
  sequences.}
\newblock {\em J. Reine Angew. Math.}, 537:67--103, 2001.

\bibitem[{\v{C}}G08]{cap-gover-cr-tractors}
A.~{\v{C}}ap and A.~R. Gover.
\newblock C{R}-tractors and the {F}efferman space.
\newblock {\em Indiana Univ. Math. J.}, 57(5):2519--2570, 2008.

\bibitem[{\v{C}}G10]{cap-gover-cr}
A.~{\v{C}}ap and A.~R. Gover.
\newblock A holonomy characterisation of {F}efferman spaces.
\newblock {\em Ann. Global Anal. Geom.}, 38(4):399--412, 2010.

\bibitem[{\v{C}}GH10]{cgh-projective}
A.~{\v{C}}ap, A.~R. Gover, and M.~Hammerl.
\newblock Projective {BGG} equations, algebraic sets, and compactifications of
  {E}instein geometries.
\newblock 2010.
\newblock arXiv:1005.2246.

\bibitem[{\v{C}}S03]{cap-slovak-weyl}
A.~{\v{C}}ap and J.~Slov{\'a}k.
\newblock Weyl structures for parabolic geometries.
\newblock {\em Math. Scand.}, 93(1):53--90, 2003.

\bibitem[{\v{C}}S09]{cap-slovak-par}
A.~{\v{C}}ap and J.~Slov{\'{a}}k.
\newblock {\em Parabolic Geometries I: Background and General Theory v.1}.
\newblock Mathematical Surveys and Monographs. American Mathematical Society,
  Providence, RI, 2009.

\bibitem[{\v{C}}SS01]{BGG-2001}
A.~{\v{C}}ap, J.~Slov\'{a}k, and V.~Sou\v{c}ek.
\newblock {Bernstein-Gelfand-Gelfand sequences.}
\newblock {\em Ann. of Math.}, 154(1):97--113, 2001.

\bibitem[{\v{C}}{\v{Z}}09]{cap-zadnik-chains}
A.~{\v{C}}ap and V.~{\v{Z}}{\'a}dn{\'{\i}}k.
\newblock On the geometry of chains.
\newblock {\em J. Differential Geom.}, 82(1):1--33, 2009.

\bibitem[DT10]{dunajski-tod}
M.~Dunajski and P.~Tod.
\newblock Four-dimensional metrics conformal to {K}\"ahler.
\newblock {\em Math. Proc. Cambridge Philos. Soc.}, 148(3):485--503, 2010.

\bibitem[DW08]{dunajski-west}
M.~Dunajski and S.~West.
\newblock Anti-self-dual conformal structures in neutral signature.
\newblock In {\em Recent developments in pseudo-{R}iemannian geometry}, ESI
  Lect. Math. Phys., pages 113--148. Eur. Math. Soc., Z\"urich, 2008.

\bibitem[Eas96]{eastwood-notes-conformal}
M.~Eastwood.
\newblock Notes on conformal differential geometry.
\newblock In {\em The {P}roceedings of the 15th {W}inter {S}chool ``{G}eometry
  and {P}hysics'' ({S}rn\'\i, 1995)}, number~43, pages 57--76, 1996.

\bibitem[EM07]{eastwood-matveev}
M.~Eastwood and V.S. Matveev.
\newblock Metric connections in projective differential geometry.
\newblock {\em Symmetries and Overdetermined Systems of Partial Differential
  Equations}, 2007.

\bibitem[Fef76]{fefferman}
C.~L. Fefferman.
\newblock Monge-{A}mp\`ere equations, the {B}ergman kernel, and geometry of
  pseudoconvex domains.
\newblock {\em Ann. of Math. (2)}, 103(2):395--416, 1976.

\bibitem[Gov10]{gover-aes}
A.~R. Gover.
\newblock Almost {E}instein and {P}oincar\'e-{E}instein manifolds in
  {R}iemannian signature.
\newblock {\em J. Geom. Phys.}, 60(2):182--204, 2010.

\bibitem[Gra87]{graham-sparlingchar}
C.~R. Graham.
\newblock On {S}parling's characterization of {F}efferman metrics.
\newblock {\em Amer. J. Math.}, 109(5):853--874, 1987.

\bibitem[Ham09]{mrh-thesis}
M.~Hammerl.
\newblock Natural {P}rolongations of {BGG}-operators. {T}hesis, {U}niversity of
  {V}ienna, 2009.

\bibitem[Ham10]{mrh-coupling}
M.~Hammerl.
\newblock Coupling solutions of {BGG}-equations in conformal spin geometry.
\newblock 2010.
\newblock arXiv:1009.1547.

\bibitem[HS09]{mrh-sag-rank2}
M.~Hammerl and K.~Sagerschnig.
\newblock Conformal structures associated to generic rank 2 distributions on
  5-manifolds---characterization and {K}illing-field decomposition.
\newblock {\em SIGMA Symmetry Integrability Geom. Methods Appl.}, 5:Paper 081,
  29, 2009.
\newblock Available at http://www.emis.de/journals/SIGMA/Cartan.html.

\bibitem[HS11]{mrh-sag-twistors}
M.~Hammerl and K.~Sagerschnig.
\newblock The twistor spinors of generic 2- and 3-distributions.
\newblock {\em Annals of Global Analysis and Geometry}, 39:403--425, 2011.

\bibitem[Kos61]{kostant-61}
B.~Kostant.
\newblock Lie algebra cohomology and the generalized {B}orel-{W}eil theorem.
\newblock {\em Ann. of Math. (2)}, 74:329--387, 1961.

\bibitem[LN11]{leistner-nurowski-g2ambient}
T.~Leistner and P.~Nurowski.
\newblock Conformal structures with exceptional ambient metrics.
\newblock {\em Ann. Sc. Norm. Super. Pisa Cl. Sci.}, 2011.
\newblock to appear.

\bibitem[NS03]{nurowski-sparling-cr}
P.~Nurowski and G.~A. Sparling.
\newblock Three-dimensional {C}auchy-{R}iemann structures and second-order
  ordinary differential equations.
\newblock {\em Classical Quantum Gravity}, 20(23):4995--5016, 2003.

\bibitem[Nur05]{nurowski-metric}
P.~Nurowski.
\newblock Differential equations and conformal structures.
\newblock {\em J. Geom. Phys.}, 55(1):19--49, 2005.

\bibitem[Nur11]{nurowski-projective-metric}
P.~Nurowski.
\newblock Projective versus metric structures.
\newblock {\em J.Geom.Phys.}, 2011.
\newblock to appear.

\bibitem[PR87]{penrose-rindler-87}
R.~Penrose and W.~Rindler.
\newblock {\em Spinors and space-time. {V}ol.\ 1}.
\newblock Cambridge Monographs on Mathematical Physics. Cambridge University
  Press, Cambridge, 1987.
\newblock Two-spinor calculus and relativistic fields.

\bibitem[Sha97]{sharpe}
R.W. Sharpe.
\newblock {\em Differential Geometry - Cartan's Generalisation of Klein's
  Erlangen Program}.
\newblock Springer-{V}erlag, 1997.

\bibitem[{\v{S}}{\v{Z}}]{silhan-zadnik-esitalk}
J.~{\v{S}}ilhan and V.~{\v{Z}}{\'a}dn{\'{\i}}k.
\newblock Conformal split signature structures from projective structures.
\newblock Talk at ESI workshop on Cartan geometries 2011.

\bibitem[Wal54]{walker}
A.~G. Walker.
\newblock Riemann extensions of non-{R}iemannian spaces.
\newblock In {\em Convegno {I}nternazionale di {G}eometria {D}ifferenziale,
  {I}talia, 1953}, pages 64--70. Edizioni Cremonese, Roma, 1954.

\end{thebibliography}
\end{document}